\newif\ifHideFoot
\newif\ifHideApp
\numberwithin{equation}{section}
\newtheorem{teo}{Theorem}[section]
\newtheorem{pro}[teo]{Proposition}
\newtheorem{lem}[teo]{Lemma}
\newtheorem{cor}[teo]{Corollary}
\newtheorem{fct}[teo]{Fact}
\newtheorem{teoalpha}{Theorem}
\newtheorem{coralpha}[teoalpha]{Corollary}
\newtheorem*{teoalpha*}{Theorem~\ref{T:main}}
\theoremstyle{definition}
\theoremstyle{remark}
\newtheorem{rem}[teo]{Remark}
\newcommand{\Yano}[1]{}
\newcommand{\Jeff}[1]{}
\newcommand{\Charles}[1]{}
\newcommand{\marg}[1]{\normalsize{{
   \color{red}\footnote{{\color{blue}#1}}}{\marginpar[\vskip
   -.25cm{\color{red}\hfill$\implies$\tiny\thefootnote}]{\vskip
    -.2cm{\color{red}$\impliedby$\tiny\thefootnote}}}}}
\newcommand{\Yano}[1]{\marg{(Yano) #1}}
\newcommand{\Jeff}[1]{\marg{(Jeff) #1}}
\newcommand{\Charles}[1]{\marg{(Charles) #1}}
\DeclareMathOperator{\coniveau}{N}
\global\let\hom\undefined
\DeclareMathOperator{\hom}{hom}
\newcommand{\til}[1]{{\widetilde{#1}}}
\def\et{{\rm \acute et}}
\def\cl{{\rm cl}}
\def\tors{{\mathrm{tors}}}
\def\cx{{\mathbb C}}
\def\rat{{\mathbb Q}}
\def\integ{{\mathbb Z}}
\def\iso{\simeq}
\renewcommand{\bar}[1]{{\overline{#1}}}
\DeclareMathOperator{\aut}{Aut}
\DeclareMathOperator{\spec}{Spec}
\DeclareMathOperator{\A}{A}
\newcommand{\abs}[1]{{\left|#1\right|}}
\newcommand{\invlim}[1]{\lim_{\stackrel{\leftarrow}{#1}}}
\title{The Walker Abel--Jacobi map descends}
\author{Jeffrey D. Achter}
\address{Colorado State University, Department of Mathematics,
 Fort Collins, CO 80523,
 USA}
\email{j.achter@colostate.edu}
\author{Sebastian Casalaina-Martin }
\address{University of Colorado, Department of Mathematics,
 Boulder, CO 80309, USA }
\email{casa@math.colorado.edu}
\author{Charles Vial}
\address{Fakult\"at f\"ur Mathematik, Universit\"at Bielefeld, Germany}
\email{vial@math.uni-bielefeld.de}
\thanks{The first- and second-named authors were partially supported  by grants 637075 and 581058, respectively, from the Simons  Foundation.}
	\def\MR#1{}
\begin{document}

 \begin{abstract} 
For a complex projective manifold, Walker has defined a regular homomorphism lifting Griffiths' Abel--Jacobi map on algebraically trivial cycle classes to a complex abelian variety,  which admits a finite homomorphism to the Griffiths intermediate Jacobian.  Recently Suzuki gave an alternate, Hodge-theoretic, construction of this Walker Abel--Jacobi map.  We provide a third construction based on a general lifting property for surjective regular homomorphisms, and prove that the Walker Abel--Jacobi map descends canonically to any field of definition of the complex projective manifold. In addition, we determine the image of the l-adic Bloch map restricted to algebraically trivial cycle classes in terms of the coniveau filtration.
  \end{abstract}

  \maketitle

    Let $H$ be a pure integral Hodge structure of weight-$(-1)$. The \emph{Jacobian} attached to $H$ is the complex torus $$J(H):= \mathrm{F}^0H_\cx \backslash H_\cx \slash H_\tau,$$
    where  $F^\bullet H_\cx$ denotes the Hodge filtration on the complexification $H_\cx:= H\otimes_{\mathbb Z} \cx$ and where, for an abelian group $G$, we denote $G_\tau$ its torsion-free quotient.
   If $X$ is a complex projective manifold, then the cohomology groups $H^{2p-1}(X,\integ(p))$ are naturally endowed with the structure of a pure Hodge structure of weight-$(-1)$. In the seminal paper~\cite{Griffiths}, Griffiths defined an Abel--Jacobi map for homologically trivial cycle classes  $\operatorname{CH}^p(X)_{\hom} := \ker \left( \operatorname{CH}^p(X) \to H^{2p}(X,\integ (p))\right)$\,:
   $$\xymatrix{\operatorname{AJ} : \operatorname{CH}^p(X)_{\hom} \ar[r] &  J^{2p-1}(X):= J\big(H^{2p-1}(X,\integ(p))\big),}$$
    which is in particular functorial with respect to the action of correspondences between complex projective manifolds. 
    Since  algebraically trivial cycles in $\operatorname{CH}^p(X)$ are parametrized by smooth projective complex curves, and since the Abel map
    $\operatorname{CH}^1(C)_0 \to J(C):= J(H^1(C,\integ(1)))$ on degree-0 zero-cycle classes on a curve $C$ is an isomorphism, the image of the Abel--Jacobi map restricted to the subgroup $\operatorname{A}^p(X)\subseteq \operatorname{CH}^p(X)$ of algebraically trivial cycle classes
has image  a subtorus
    $$J_a^{2p-1}(X) \hookrightarrow J^{2p-1}(X)$$ which is algebraic, i.e., an abelian variety, and  called the \emph{algebraic intermediate Jacobian}. The resulting (surjective)  Abel--Jacobi map
   $$\psi^p : \operatorname{A}^p(X) \longrightarrow J_a^{2p-1}(X)$$
   defines a  \emph{regular homomorphism}, meaning that for all pointed  smooth connected complex varieties $(T,t_0)$ and all families of codimension-$p$ cycles $Z \in \operatorname{CH}^p(T\times X)$ the map $T(\cx) \to J_a^{2p-1}(X), t \mapsto \psi^p(Z_t-Z_{t_0})$ is induced by a complex morphism $T \to  J_a^{2p-1}(X)$ of complex varieties.
 The algebraic intermediate Jacobian $J_a^{2p-1}(X)$ can also be described Hodge-theoretically. For $\Lambda$ a commutative ring, consider the \emph{coniveau filtration} $\coniveau^\bullet$\,:
   $$\coniveau^iH^j(X,\Lambda) := \sum \ker \big( H^j(X,\Lambda) \to H^j(X\setminus Z,\Lambda)\big),$$ where the sum runs through all closed subvarieties $Z$ of codimension $\geq i$ in $X$.
   Then the algebraic intermediate Jacobian $J^{2p-1}_a(X)$ is the subtorus of $J^{2p-1}(X)$ corresponding to the inclusion of rational Hodge structures $\coniveau^{p-1}H^{2p-1}(X,\rat(p)) \subseteq H^{2p-1}(X,\rat(p))$\,; precisely,
   $$J^{2p-1}_a(X) = J\big(H^{2p-1}(X,\integ(p))\cap \coniveau^{p-1}H^{2p-1}(X,\cx)\big).$$
 On the other hand, the \emph{Walker intermediate Jacobian} is the complex torus defined as
   $$J^{2p-1}_W(X) := J\big(\coniveau^{p-1}H^{2p-1}(X,\integ(p)) \big).$$
   The inclusion of lattices $\coniveau^{p-1}H^{2p-1}(X,\integ(p)) \subseteq H^{2p-1}(X,\integ(p)) \cap \coniveau^{p-1}H^{2p-1}(X,\cx)$ induces an isogeny of complex tori
   $$\xymatrix{\alpha: J^{2p-1}_W(X) \ar[r]& J^{2p-1}_a(X),}$$
   which in fact is an isogeny of complex abelian varieties, since the pull-back of an ample line bundle on $J^{2p-1}_a(X)$ along the finite map $\alpha$ is ample.
\medskip

Walker has shown that the Abel--Jacobi map on algebraically trivial cycle classes  lifts to the Walker intermediate Jacobian\,:

\begin{teoalpha}[Walker, \cite{Walker}] 
	\label{T:Walker}
	Let $X$ be a  complex projective manifold.
	There exists a regular homomorphism $\psi^p_W$ lifting the Abel--Jacobi map $\psi^p$ along the isogeny $\alpha : J^{2p-1}_W(X)\to J^{2p-1}_a(X)$, i.e., making the following diagram commute\,:
	\[\begin{tikzcd}
	&& {J_W^{2p-1}(X)} \\
	{\operatorname{A}^p(X)} && {J_a^{2p-1}(X).}
	\arrow["{\psi_W^p}", from=2-1, to=1-3]
	\arrow["{\psi^p}"', from=2-1, to=2-3]
	\arrow["{\alpha}", from=1-3, to=2-3]
	\end{tikzcd}\]
\end{teoalpha}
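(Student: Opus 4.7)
The plan is to invoke a general lifting criterion for surjective regular homomorphisms along isogenies of complex abelian varieties, whose hypothesis is then verified by a coniveau support computation.

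First, I would establish the following criterion (the authors' ``general lifting property''): given a surjective regular homomorphism $\phi : \operatorname{A}^p(X) \to A$ to a complex abelian variety $A$ and an isogeny $\beta : B \to A$ of complex abelian varieties, $\phi$ lifts uniquely to a regular homomorphism $\tilde\phi : \operatorname{A}^p(X) \to B$ with $\beta \circ \tilde\phi = \phi$ if and only if, for every smooth projective curve $C$ and every cycle $Z \in \operatorname{CH}^p(C \times X)$, the induced morphism of complex abelian varieties $\phi_{C,Z}: J(C) \to A$ factors through $\beta$. Equivalently, the induced map on integral first homology $H_1(J(C), \integ) = H_1(C, \integ) \to H_1(A, \integ)$ has image in the sublattice $H_1(B, \integ) \subseteq H_1(A, \integ)$.

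To apply this criterion to $\beta = \alpha : J_W^{2p-1}(X) \to J_a^{2p-1}(X)$, one must show that for every smooth projective curve $C$ and every cycle $Z \in \operatorname{CH}^p(C \times X)$, the correspondence
\[
Z_* : H^1(C, \integ(1)) \longrightarrow H^{2p-1}(X, \integ(p))
\]
has image inside $\coniveau^{p-1} H^{2p-1}(X, \integ(p))$. This reduces to a support argument: since $|Z| \subset C \times X$ is of codimension $p$, we have $\dim |Z| = \dim X + 1 - p$, whence $\pi_X(|Z|) \subset X$ has codimension at least $p - 1$. Taking a smooth alteration $\widetilde{Z} \to |Z|$, the correspondence $Z_*$ factors through the proper pushforward along $\widetilde{Z} \to X$, so its image consists of classes supported on $\pi_X(|Z|)$ and hence lies in $\coniveau^{p-1} H^{2p-1}(X, \integ(p))$ by the definition of the coniveau filtration.

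The main obstacle is the general lifting criterion itself, and specifically the well-definedness of the candidate lift on $\operatorname{A}^p(X)$: two representations $(C_i, Z_i, c_0^{(i)}, c_1^{(i)})$ of the same algebraically trivial class should give the same image in $B$. This is addressed by working on the product $C_1 \times C_2$ with the combined cycle $p_{13}^* Z_1 - p_{23}^* Z_2$, factoring through $\alb(C_1 \times C_2) = J(C_1) \times J(C_2)$ (on which a lift exists by assembling the individual curve-lifts and invoking the uniqueness of isogeny lifts), and arguing that the difference of the two candidate values lies in the image of a connected variety landing in the finite kernel $\ker \beta$ while hitting $0$ at the base point, hence vanishes. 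Combined with the coniveau verification above, this yields the desired regular homomorphism $\psi_W^p$ lifting $\psi^p$ along $\alpha$.
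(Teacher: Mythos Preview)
Your overall two-step strategy --- establish a general lifting criterion for regular homomorphisms along isogenies, then verify it via a coniveau support computation --- is exactly the paper's. The coniveau verification (that $Z_*$ on $H^1(C,\integ(1))$ lands in $\coniveau^{p-1}H^{2p-1}(X,\integ(p))$) is correct and standard.

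The gap is in your proof of the lifting criterion, at the well-definedness step. You assert that the difference of the two candidate values ``lies in the image of a connected variety landing in the finite kernel $\ker\beta$ while hitting $0$ at the base point,'' but no such variety is exhibited. Concretely, that difference is the image under the lifted map $(\tilde f_1,-\tilde f_2):J(C_1)\times J(C_2)\to B$ of the point $\big([c_1^{(1)}]-[c_0^{(1)}],\,[c_1^{(2)}]-[c_0^{(2)}]\big)$, which lies in the algebraic subgroup $\ker(f_1-f_2)\subseteq J(C_1)\times J(C_2)$. The identity component of that kernel does map to $0\in B$, but your point has no reason to lie in the identity component; and while the surface $C_1\times C_2$ is connected, its image in $B$ does not land in $\ker\beta$. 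This is precisely the difficulty the paper isolates in the proof of Proposition~\ref{P:LiftReg}: for $\gamma\in\operatorname{A}_0(T)$ with $\Gamma_*(\gamma)=0$ one must show $(f'\circ\operatorname{alb})(\gamma)=0$, and connectedness alone does not give this (nor is $\ker\Gamma_*$ obviously divisible, so one cannot argue that a divisible group maps trivially into the finite group $\ker\beta$).

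The paper resolves this by formulating the criterion on Tate modules (Proposition~\ref{P:LiftReg}) and then, crucially, running a specialization argument that reduces to the case of an algebraic closure of a finite field, where $\operatorname{A}_0(T)$ is entirely torsion and the assumed lift on torsion settles the matter directly. The criterion is then verified not by your $H_1$ computation but via the identification of $\psi^p$ on $\ell$-primary torsion with the Bloch map (Proposition~\ref{P:AJ-Bloch}) together with Suzuki's factorization of the latter through coniveau (Lemma~\ref{L:Suzuki}); the content is close to your support argument, but packaged differently.
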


The regular homomorphism $\psi^p_W : \operatorname{A}^p(X) \longrightarrow J_W^{2p-1}(X)$ 
will be called the \emph{Walker Abel--Jacobi map}.
It was first constructed by Walker~\cite{Walker} using Lawson homology\,; recently, Suzuki~\cite{Suzuki-WalkerMap} gave a Hodge-theoretic construction relying solely on Bloch--Ogus theory~\cite{BlochOgus}. That $\psi^p_W$ is regular is \cite[Lem.~7.3]{Walker} or \cite[Cor.~2.6]{Suzuki-WalkerMap}. In addition, it is shown in \cite[Lem.~2.4]{Suzuki-WalkerMap} that $\psi^p_W$ is compatible with the action of correspondences.
 In the case where  $p=1,2,\dim X$,  the usual Abel--Jacobi map $\psi^p$ is universal among regular homomorphisms (see \cite[Thm.~C]{Murre}), and so the Walker Abel--Jacobi map coincides with the usual Abel--Jacobi map (i.e., the isogeny $\alpha$ is an isomorphism), while in general it differs (see Ottem--Suzuki~\cite[Cor.~4.2]{Ottem-Suzuki}) and hence provides a finer invariant for algebraically trivial cycles.

\medskip

The first aim of this paper is to provide a new proof of Walker's Theorem~\ref{T:Walker}\,; see \S \ref{SS:Walker}. Our proof is based on the general lifting Theorem~\ref{T:factorization} for regular homomorphisms (see also Proposition~\ref{P:LiftReg}), 
which we hope could prove useful in other situations, especially in positive characteristic.
 
 \medskip 
As our main new result, we show that if $X$ is defined over a field $K\subseteq \mathbb C$, then the Walker intermediate Jacobian descends to $K$ in such a way that  the diagram of Theorem~\ref{T:Walker}  can be made $\operatorname{Aut}(\cx/K)$-equivariant\,:

\begin{teoalpha}[Distinguished model]\label{T:main} Let $X$ be a smooth projective   variety variety over a field $K\subseteq \mathbb C$.
	Then the isogeny $\alpha: J^{2p-1}_W(X_{\mathbb C}) \to J^{2p-1}_a(X_{\mathbb C})$ of complex abelian varieties descends uniquely to an isogeny $J^{2p-1}_{W,X/K}\to J^{2p-1}_{a,X/K}$ of abelian varieties over  $K$ in such a way that both $\psi^p$ and $\psi^p_W$ are $\operatorname{Aut}(\cx/K)$-equivariant.
\end{teoalpha}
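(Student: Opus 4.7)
\emph{Proof proposal.} The plan is to descend $J^{2p-1}_W$ by combining the already-known descent of $J^{2p-1}_a$, an \'etale-cohomological reinterpretation of the coniveau sublattice defining $J^{2p-1}_W$, and the uniqueness built into the lifting Theorem~\ref{T:factorization}.

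As input, take the uniqueness of a descent $J^{2p-1}_{a,X/K}$ of $J^{2p-1}_a(X_\cx)$ to an abelian variety over $K$ for which $\psi^p$ is $\aut(\cx/K)$-equivariant (Murre for $p=2$ and in general by Achter--Casalaina-Martin). The first substantive step is to reinterpret the finite-index sublattice $L_W := \coniveau^{p-1}H^{2p-1}(X_\cx,\integ(p))_\tau$ of $L_a := (H^{2p-1}(X_\cx,\integ(p))\cap \coniveau^{p-1}H^{2p-1}(X_\cx,\cx))_\tau$ \'etale-cohomologically. Under the \'etale--Betti comparison, the $\ell$-adic completion of $L_a$ is the $\ell$-adic Tate module of $J^{2p-1}_{a,X/K,\bar K}$, and the $\ell$-adic completion of $L_W$ should be identified with $\coniveau^{p-1}H^{2p-1}_\et(X_{\bar K},\integ_\ell(p))$. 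Because the \'etale coniveau filtration is defined via closed $\bar K$-subvarieties, which are permuted by $\gal(\bar K/K)$, this sublattice is Galois-stable at every $\ell$. The standard correspondence between finite-index Galois-stable sublattices of $\prod_\ell T_\ell J^{2p-1}_{a,X/K,\bar K}$ and isogeny covers of $J^{2p-1}_{a,X/K}$ defined over $K$ then yields an isogeny $\alpha_{X/K}: J^{2p-1}_{W,X/K}\to J^{2p-1}_{a,X/K}$ of abelian varieties over $K$ whose base change to $\cx$ is $\alpha$.

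To check equivariance of $\psi^p_W$: fix $\sigma\in\aut(\cx/K)$ and consider the conjugated regular homomorphism $\sigma\circ\psi^p_W\circ\sigma^{-1}:\A^p(X_\cx)\to J^{2p-1}_W(X_\cx)$, where the $\sigma$-action on the target is the one coming from the descent just constructed. This composition lifts $\sigma\circ\psi^p\circ\sigma^{-1}=\psi^p$ along $\alpha$, so the uniqueness clause of Theorem~\ref{T:factorization} forces it to equal $\psi^p_W$. Uniqueness of the descent itself is then immediate from the surjectivity of $\psi^p_W$ together with the equivariance requirement. The main technical obstacle is the \'etale-cohomological identification of $L_W$---equivalently, the statement that the image of the $\ell$-adic Bloch map on $\A^p(X)$ coincides with the coniveau piece $\coniveau^{p-1}H^{2p-1}_\et(X_{\bar K},\integ_\ell(p))$ of \'etale cohomology. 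This is precisely the auxiliary computation advertised in the abstract, and once it is in hand the Galois-equivariance of the defining sublattice is manifest, and the descent of $\alpha$ together with the equivariance of $\psi^p_W$ follows formally.
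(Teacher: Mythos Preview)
Your approach is sound and close to the paper's, but the final paragraph conflates two different inputs, and the distinction matters.

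The paper gives two proofs. The first (\S\ref{SS:firstproof}) uses Suzuki's observation (Theorem~\ref{T:Walker-initial}) that $\ker\psi^p_W$ is divisible, so that $\psi^p_W$ is the \emph{universal} lift of $\psi^p$ along isogenies; the descent clause of Theorem~\ref{T:factorization} then applies directly. The second (\S\ref{SS:main}), closer in spirit to your sketch, chooses a $K$-curve $C$ with $J(C_\cx)\twoheadrightarrow J^{2p-1}_a(X_\cx)$ and shows that the surjection $\gamma:J(C_\cx)\to J^{2p-1}_W(X_\cx)$ descends by checking that $(\ker\gamma)[\ell^\infty]$ is Galois-stable, using the \'etale--Betti coniveau comparison (Lemma~\ref{L:ConiGAGAbody}) and the compatibility of the Bloch and Abel--Jacobi maps on torsion.

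Your last paragraph asserts that the ``\'etale-cohomological identification of $L_W$'' is \emph{equivalent} to the statement $\operatorname{im}\big(T_\ell\lambda^p|_{\A^p}\big)=\coniveau^{p-1}H^{2p-1}_\et$. It is not. The former is simply Lemma~\ref{L:ConiGAGAbody} (Artin comparison plus flatness of $\integ_\ell$), and it is all you actually need for the sublattice-descent argument you outlined earlier. The latter is Corollary~\ref{C:imageBloch}, which the paper derives as a \emph{consequence} of Theorem~\ref{T:main} together with Theorem~\ref{T:Walker-initial}; taking it as an input would import Theorem~\ref{T:Walker-initial} and collapse your argument into the paper's first proof. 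Separately, if you run the pure sublattice argument, the delicate point is not the identification of $L_W\otimes\integ_\ell$ but rather that the inclusion $T_\ell J^{2p-1}_{a,X/K}\hookrightarrow H^{2p-1}_\et(X_{\bar K},\integ_\ell(p))_\tau$ is Galois-equivariant \emph{integrally}; the result you cite from \cite{ACMVdmij} is stated only after $\otimes\,\rat_\ell$. The paper's second proof finesses this by routing everything through the auxiliary curve $C$, whose Tate module is already identified Galois-equivariantly with $H^1_\et(C_{\bar K},\integ_\ell(1))$. Finally, the uniqueness you invoke for $\sigma\circ\psi^p_W\circ\sigma^{-1}=\psi^p_W$ is Fact~\ref{F:el}, not the uniqueness clause of Theorem~\ref{T:factorization}.
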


The part of Theorem \ref{T:main} stated for the algebraic intermediate Jacobian was proved in \cite[Thm.~A]{ACMVdmij} (see also~\cite[Thm.~9.1]{ACMVfunctor}).  
We provide two proofs of Theorem~\ref{T:main}. The first one is presented in \S \ref{SS:firstproof}\,; it is based on \cite[Thm.~A]{ACMVdmij}, on the universality of the Walker Abel--Jacobi map among lifts of the Abel--Jacobi map along isogenies (Theorem~\ref{T:Walker-initial}) and on the general descent statement of our lifting Theorem~\ref{T:factorization}. The second one is presented in \S \ref{SS:main} and builds directly upon \cite{ACMVdmij}.
We note also here that, as in \cite[Thm.~A]{ACMVdmij} and \cite[Prop.~3.1]{ACMVnormal}, which concern the case of the algebraic intermediate Jacobian, 
the $K$-structure in Theorem \ref{T:main} for the Walker intermediate Jacobian and Walker Abel--Jacobi map is stable under field extensions $K\subseteq L\subseteq \mathbb C$ (Remark~\ref{R:L/K}), 
and independent of the embedding of $K$ into $\mathbb C$ (Remark \ref{R:EmbKinCC}).  
As a consequence, the kernel of the Walker Abel--Jacobi map is independent of the choice of embedding of $K$ into $\mathbb C$\,; the analogous statement for the Abel--Jacobi map on algebraically trivial cycle classes is \cite[Rem.~3.4]{ACMVnormal}.
\medskip 

From our second approach to proving Theorem~\ref{T:main} we obtain two applications.\medskip

First, 
we obtain the following proposition, which  provides further arithmetic significance to the Walker Abel--Jacobi map, by showing that the torsion-free quotient of  $\coniveau^{p-1}H_{\operatorname{\acute{e}t}}^{2p-1}(X_{\mathbb C},\integ_\ell(p))$ can be modeled by an abelian variety independently of $\ell$\,:

\begin{coralpha}[Modeling coniveau integrally]
	\label{C:model}
   Let $X$ be a smooth projective variety over a field $K\subseteq \cx$. Then for all integers $p$, 
  the model $J^{2p-1}_{W,X/K}$ over $K$ of the Walker intermediate Jacobian $J^{2p-1}_W(X_{\mathbb C})$ (Theorem~\ref{T:main}) has the property that for all primes $\ell$ we have canonical isomorphisms of $\operatorname{Aut}(\cx/K)$-representations
   $$T_\ell J^{2p-1}_{W,X/K}  \simeq \coniveau^{p-1}H_{\operatorname{\acute{e}t}}^{2p-1}(X_{\mathbb C},\integ_\ell(p))_\tau.$$
\end{coralpha}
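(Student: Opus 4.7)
My plan is to compute $T_\ell J^{2p-1}_{W,X/K}$ directly from the Hodge-theoretic definition of the Walker intermediate Jacobian over $\cx$, transport the answer to the étale side via the Artin comparison isomorphism, and then invoke the construction of the $K$-model from Theorem~\ref{T:main} to promote the resulting isomorphism to one of $\aut(\cx/K)$-representations.

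For the first two steps, I would use that $J^{2p-1}_W(X_\cx) = J(\coniveau^{p-1}H^{2p-1}(X,\integ(p)))$ and that for any pure integral weight-$(-1)$ Hodge structure $H$ one has $T_\ell J(H) \simeq H_\tau \otimes_\integ \integ_\ell$ (directly from $J(H)$ being a complex torus with lattice $H_\tau$). This reduces the computation to checking that Artin comparison $H^{2p-1}(X,\integ(p)) \otimes \integ_\ell \simeq H^{2p-1}_\et(X_\cx,\integ_\ell(p))$ carries Betti coniveau tensored with $\integ_\ell$ onto étale coniveau. Writing $\coniveau^{p-1}$ as a sum, over closed algebraic $Z \subseteq X$ of codimension $\geq p-1$, of kernels of restrictions $H^{2p-1}(X) \to H^{2p-1}(X\setminus Z)$, this compatibility follows from applying Artin comparison to the smooth quasi-projective $X\setminus Z$, compatibility of restriction with comparison, and flatness of $\integ_\ell$ over $\integ$ (so that $-\otimes\integ_\ell$ preserves both kernels and sums). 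Combining with the elementary identity $G_\tau \otimes \integ_\ell = (G\otimes \integ_\ell)_\tau$ for finitely generated abelian $G$, I obtain
$$T_\ell J^{2p-1}_W(X_\cx) \;\simeq\; \coniveau^{p-1}H^{2p-1}_\et(X_\cx,\integ_\ell(p))_\tau$$
as abelian groups.

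The remaining and most delicate step is $\aut(\cx/K)$-equivariance. For this I would appeal to the second construction of the $K$-model presented in \S\ref{SS:main}, which descends $J^{2p-1}_W(X_\cx)$ via the $K$-form of the isogeny $\alpha$ together with the $K$-form of $J^{2p-1}_a$ from \cite{ACMVdmij}. By construction, $T_\ell J^{2p-1}_{W,X/K}$ is exhibited as a Galois-stable sublattice of $T_\ell J^{2p-1}_{a,X/K}$, and the natural candidate for this sublattice is $\coniveau^{p-1}H^{2p-1}_\et(X_\cx,\integ_\ell(p))_\tau$, which is visibly preserved by $\aut(\cx/K)$ because that group permutes the closed subvarieties of $X_\cx$ used to define the coniveau filtration. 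The main obstacle is therefore a compatibility check: one must verify that the Hodge-theoretic sublattice identified in the first two steps coincides, as a Galois submodule of $T_\ell J^{2p-1}_{a,X/K}$, with the étale sublattice used in the descent in Theorem~\ref{T:main}. Once this is confirmed, the isomorphism of the previous display is automatically $\aut(\cx/K)$-equivariant, and Corollary~\ref{C:model} follows.
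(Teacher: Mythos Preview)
Your proposal is correct and takes essentially the same approach as the paper: the paper's brief proof cites the identification \eqref{E:JWtorsEt} (your steps 1--2 are exactly Lemma~\ref{L:ConiGAGAbody} combined with \eqref{E:JWtors}) and then appeals to ``the very construction'' of the $K$-model in \S\ref{SS:main} for Galois equivariance (your step 3). The only minor difference is that the paper reads off the Galois action directly from the surjection $\gamma: J(C_\cx) \to J^{2p-1}_W(X_\cx)$ and its identification with $\Gamma_*$ on \'etale cohomology in the large diagram of \S\ref{SS:main}, rather than via the inclusion into $T_\ell J^{2p-1}_{a,X/K}$, which renders the compatibility check you flag immediate.
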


This result is established in \S \ref{SS:Mazur}. It was established with $\rat_\ell$-coefficients in \cite[Thm.~A]{ACMVdmij} with the model of the algebraic intermediate Jacobian over $K$ in place of that of the Walker intermediate Jacobian.   We direct the reader to \cite{ACMVbloch} for more details, and in particular,  the connection to a question of Mazur~\cite{mazurprobICCM}.
\medskip

Second, for any smooth projective variety $X$ over an algebraically closed field and for any prime $\ell$ invertible in $X$, Bloch~\cite{bloch} has defined a map 
$\lambda^p : \operatorname{CH}^p(X)[\ell^\infty] \to H^{2p-1}_{\mathrm{\acute et}}(X,\mathbb Q_\ell/\integ_\ell(p))$ on $\ell$-primary torsion. 
The \emph{$\ell$-adic Bloch map} $T_\ell\lambda^p : T_\ell\operatorname{CH}^p(X) \to H^{2p-1}_{\mathrm{\acute et}}(X,\integ_\ell(p))_\tau$ is then obtained by taking Tate modules and making the identification $T_\ell H^{i}_{\mathrm{\acute et}}(X,\mathbb Q_\ell/\integ_\ell(j)) = H^{i}_{\mathrm{\acute et}}(X,\mathbb \integ_\ell(j))_\tau$\,; we refer to \cite[(2.6.5)]{suwa}, and to \cite[\S A.3.3]{ACMVbloch}, for more details. Here, the Tate module associated to an $\ell$-primary torsion abelian group $M$ is the group $T_\ell M:= \varprojlim M[\ell^n]$.  Thanks to our approach to lifting regular homomorphisms along isogenies, together with the existence of the Walker Abel--Jacobi map, we determine the image of $T_\ell \lambda^p$ restricted to algebraically trivial cycle classes\,:

\begin{coralpha}\label{C:imageBloch}
Let $X$ be a smooth projective variety over a field $K$ of characteristic zero. Then 
$$\operatorname{im} \big(T_\ell\lambda^p : T_\ell \operatorname{A}^p(X_{\bar K}) \longrightarrow H_{\operatorname{\acute{e}t}}^{2p-1}(X_{\bar K},\integ_\ell(p))_\tau \big) = \coniveau^{p-1}H_{\operatorname{\acute{e}t}}^{2p-1}(X_{\bar K},\integ_\ell(p))_\tau$$
for all primes $\ell$.
\end{coralpha}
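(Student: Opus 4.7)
My plan is to factor $T_\ell\lambda^p$ restricted to $T_\ell\operatorname{A}^p(X_{\bar K})$ through the Tate module of the Walker intermediate Jacobian, and then apply Corollary~\ref{C:model}. First, the Galois-equivariance afforded by Theorem~\ref{T:main} shows that both sides of the claimed equality are $\operatorname{Aut}(\bar K/K)$-stable, so it suffices to work with $K = \bar K$; invariance of $\operatorname{A}^p$ and of $\ell$-adic étale cohomology under algebraically closed extensions of characteristic zero then permits us to further embed $\bar K \hookrightarrow \cx$ and reduce to $K = \cx$, where Corollary~\ref{C:model} applies directly.

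I would then assemble the commutative diagram
\[
\begin{tikzcd}
T_\ell \operatorname{A}^p(X) \ar[r, "T_\ell \psi^p_W"] \ar[d, hook] & T_\ell J^{2p-1}_{W,X/K} \ar[d, hook] \\
T_\ell \operatorname{CH}^p(X) \ar[r, "T_\ell \lambda^p"] & H^{2p-1}_{\operatorname{\acute{e}t}}(X, \integ_\ell(p))_\tau,
\end{tikzcd}
\]
in which the right vertical arrow is the composition of the isomorphism $T_\ell J^{2p-1}_{W,X/K} \simeq \coniveau^{p-1}H^{2p-1}_{\operatorname{\acute{e}t}}(X, \integ_\ell(p))_\tau$ from Corollary~\ref{C:model} with the natural inclusion into $H^{2p-1}_{\operatorname{\acute{e}t}}(X, \integ_\ell(p))_\tau$. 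The commutativity expresses the standard compatibility between an Abel--Jacobi-type map and the $\ell$-adic Bloch map on torsion (see for example \cite{suwa} and \cite{ACMVbloch}), combined with the fact that $\psi^p_W$ lifts $\psi^p$ through the finite isogeny $\alpha$. Once the diagram is in hand, the image of the restriction of $T_\ell\lambda^p$ to $T_\ell\operatorname{A}^p(X)$ agrees with the image of the right vertical arrow, namely $\coniveau^{p-1}H^{2p-1}_{\operatorname{\acute{e}t}}(X, \integ_\ell(p))_\tau$, \emph{provided} $T_\ell\psi^p_W$ is surjective.

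The hard part is therefore the surjectivity of $T_\ell\psi^p_W$. I would deduce it from the following ingredients: $\operatorname{A}^p(X)$ is a divisible abelian group, being generated by images of Jacobians of smooth projective curves, so that the snake lemma applied to multiplication by $\ell^n$ on the short exact sequence $0 \to \ker \psi^p_W \to \operatorname{A}^p(X) \to J^{2p-1}_{W,X/K}(\cx) \to 0$ reduces surjectivity on $\ell^n$-torsion to the $\ell$-divisibility of $\ker(\psi^p_W)$. The latter should follow from the classical analog for $\psi^p$ (via Bloch--Rojtman--Murre type results on torsion) combined with the finiteness of the isogeny $\alpha$ and the general lifting formalism of Theorem~\ref{T:factorization} underpinning the construction of $\psi^p_W$. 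Passage to the inverse limit then yields the desired surjectivity on Tate modules, and the corollary follows. The principal obstacle I anticipate is precisely this torsion-level surjectivity, which is not automatic from the Hodge-theoretic description of $J^{2p-1}_W$ and must be extracted from the regular-homomorphism structure.
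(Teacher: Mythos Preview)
Your overall architecture matches the paper's: reduce to $\cx$, factor $T_\ell\lambda^p$ on algebraically trivial cycles through $T_\ell\psi^p_W$ and the identification $T_\ell J^{2p-1}_W(X_\cx)\simeq \coniveau^{p-1}H^{2p-1}_{\operatorname{\acute et}}(X_\cx,\integ_\ell(p))_\tau$, and then argue that $T_\ell\psi^p_W$ is surjective. The paper does exactly this, invoking Lemma~\ref{L:BlochWalker} for the diagram and Theorem~\ref{T:Walker-initial} together with Corollary~\ref{C:factorization} for the surjectivity.

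The genuine gap is in your justification of the surjectivity of $T_\ell\psi^p_W$, i.e., the $\ell$-divisibility of $\ker\psi^p_W$. You propose to extract this from ``the classical analog for $\psi^p$'' plus the finiteness of $\alpha$. This cannot work: the kernel of the ordinary Abel--Jacobi map $\psi^p$ is \emph{not} divisible in general for $p>2$ (see the remark following Theorem~\ref{T:Walker-initial}, citing \cite[Cor.~4.2]{Ottem-Suzuki}). Indeed, if $\ker\psi^p$ were divisible, then since $\ker\psi^p_W$ sits inside it with finite index (the quotient injects into $\ker\alpha$), one would have $\ker\psi^p_W=\ker\psi^p$ and hence $\alpha$ an isomorphism, contradicting the Ottem--Suzuki examples. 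So no argument that passes through divisibility of $\ker\psi^p$ can succeed, and Bloch--Roitman--Murre results (which concern $p=2$ or $p=\dim X$) do not cover the general case.

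The missing input is precisely Theorem~\ref{T:Walker-initial} (due to Suzuki): $\ker\psi^p_W$ is divisible because, by \cite[Lem.~2.2]{Suzuki-MRL}, $\ker(\psi^p_W[\ell^\infty])$ is a quotient of $K\otimes\rat_\ell/\integ_\ell$ for a certain abelian group $K$, hence divisible. Once you cite this, Corollary~\ref{C:factorization} (equivalence of (1) and (3)) gives the surjectivity of $T_\ell\psi^p_W$ and your argument goes through. Note also that Theorem~\ref{T:factorization} does not by itself produce $\psi^p_W$ as the universal lift; that identification again requires Theorem~\ref{T:Walker-initial}.
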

This extends  \cite[Prop.~5.2]{suwa} (see also \cite[Prop.~2.1]{ACMVbloch}), where the images of the usual Bloch map $\lambda^p$ and of $T_\ell \lambda^p \otimes \rat_\ell$, both restricted to algebraically trivial cycle classes, were determined.

\subsection*{Acknowledgments} We thank Fumiaki Suzuki for mentioning to us that the Walker Abel--Jacobi map does not lift along non-trivial isogenies (Theorem~\ref{T:Walker-initial}), and for bringing \cite{BF84} to our attention.  We also thank the referee for helpful suggestions.

\section{Lifting regular homomorphisms along isogenies}

\subsection{An elementary fact}
We start with the following elementary fact, which will be used recurringly throughout this note.
\begin{fct}\label{F:el}
	Let $f: D \to G$ and $\alpha : G' \to G$ be homomorphisms of abelian groups. Assume $D$ is divisible and that $\ker \alpha$ is finite. Then there exists at most one homomorphism $f' : D \to G'$ such that $\alpha \circ f' = f$, i.e., such that the following diagram commutes\,:
	\[\begin{tikzcd}
	&& {G'} \\
	{D} && {G.}
	\arrow["{f'}", from=2-1, to=1-3]
	\arrow["{f}"', from=2-1, to=2-3]
	\arrow["{\alpha}", from=1-3, to=2-3]
	\end{tikzcd}\]
Moreover, if there is a group $H$ acting on $D$, $G$, and $G'$, and $f$ and $\alpha$ are $H$-equivariant, then $f'$, if it exists,  is $H$-equivariant, as well. \qed
\end{fct}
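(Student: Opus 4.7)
The plan is to reduce uniqueness to showing that any homomorphism from a divisible group to a finite group is trivial. Suppose $f'_1, f'_2 : D \to G'$ both satisfy $\alpha \circ f'_i = f$. Then the difference $g := f'_1 - f'_2$ is a homomorphism from $D$ into $\ker \alpha$, and the image $g(D) \subseteq \ker\alpha$ is finite. On the other hand, $g(D)$ is a quotient of the divisible group $D$, hence itself divisible.

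The key step is the standard observation that a finite divisible abelian group is trivial: if $A$ is finite and divisible, pick any $a \in A$, let $n = |A|$, and by divisibility write $a = nb$ for some $b \in A$; but $nb = 0$ since $n$ annihilates $A$, so $a = 0$. Applying this to $g(D)$ gives $g = 0$, i.e., $f'_1 = f'_2$. I expect this to be the only substantive content of the proof; there are no obstacles.

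For the equivariance statement, assume $H$ acts on $D$, $G$, $G'$ with $f$ and $\alpha$ equivariant, and that a lift $f'$ exists. For each $h \in H$, define $f'_h : D \to G'$ by $f'_h(d) := h \cdot f'(h^{-1} \cdot d)$. This is still a homomorphism, and by equivariance of $\alpha$ and $f$,
\[
\alpha(f'_h(d)) = h \cdot \alpha(f'(h^{-1} d)) = h \cdot f(h^{-1} d) = f(d),
\]
so $f'_h$ is also a lift of $f$ along $\alpha$. By the uniqueness just established, $f'_h = f'$, which is precisely the $H$-equivariance of $f'$. This piggybacks directly on the uniqueness, so no additional work is required.
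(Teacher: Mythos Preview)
Your argument is correct. The paper omits the proof entirely (the statement ends with a \qed\ immediately), treating it as an elementary fact; your reduction to ``a finite divisible abelian group is trivial'' and your derivation of equivariance from uniqueness are exactly the standard justifications one would supply.
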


As a first consequence, note that since for a smooth complex  projective variety   $X$ one has that $\operatorname{A}^p(X)$ is a divisible group (e.g., \cite[Lem.~7.10]{BlochOgus}), there is at most one homomorphism $\psi^p_W : \operatorname{A}^p(X) \rightarrow J_W^{2p-1}(X)$  such that $\alpha \circ \psi^p_W = \psi^p$\,; i.e., there is at most one lifting of the Abel--Jacobi map to the Walker intermediate Jacobian.

\subsection{Lifting homomorphisms of abelian varieties along isogenies}
We have the following elementary  lemma on lifting morphisms of
abelian varieties.  (Recall that for an abelian variety $A$ over a
field $K$ of positive characteristic, the $N$-torsion group scheme
$A[N]$ may carry strictly more information than the abstract group of
points $A[N](\bar K)$.)

\begin{lem}
  \label{L:LiftAb}
  Let $f:B \to A$ be a morphism of abelian varieties over a field $K$, and let $\alpha:A' \to A$ be an isogeny of abelian varieties over $K$.  The following are equivalent\,:
  \begin{enumerate}
\item There exists a lift $ f':B\to A'$ of $f$\,; i.e.,  there is a commutative diagram
$$
\xymatrix{
&&A'\ar[d]^\alpha\\
B \ar[rr]_f \ar[rru]^{f'}&& A.\\
}
$$

\item There exists a lift of $f$ restricted to
  torsion schemes\,; i.e., for each natural number $N$ there is a commutative diagram of finite group schemes
\begin{equation}
  \label{E:diagLiftTors}
\xymatrix{
	&&A'[N]\ar[d]^{\alpha[N]}\\
	B[N] \ar[rr]_{f[N]} \ar[rru]^{ (f[N])'}&& A[N]\\
}
\end{equation}
such that $(f[MN])'|_{B[N]} = (f[N])'$.
  \end{enumerate}
If $\alpha$ is separable (equivalently, \'etale), and $\Omega/K$ is any field extension with $\Omega$ algebraically closed,
 then (1) and (2) are also equivalent to each of the following conditions\,:

\begin{enumerate}[resume]
\item  There exists a group-theoretic lift $(f(\Omega)_{\mathrm{tors}})'
  :B(\Omega)_{\mathrm{tors}} \to A'(\Omega)_{\mathrm{tors}}$ of $f(\Omega)$ restricted to
  torsion points\,; i.e.,  there is a commutative diagram of torsion
  abelian groups
$$
\xymatrix{
	&&A'(\Omega)_{\mathrm{tors}}\ar[d]^{\alpha(\Omega)_{\mathrm{tors}}}\\
	B(\Omega)_{\mathrm{tors}} \ar[rr]_{f(\Omega)_{\mathrm{tors}}} \ar[rru]^{
          (f(\Omega)_{\mathrm{tors}})'}&& A(\Omega)_{\mathrm{tors}}.\\
}
$$

\item   For all prime numbers $l$ there exists a group-theoretic lift $(T_lf)'  :T_l B \to T_l A'$ of $T_l f$, the map on Tate modules\,; i.e.,  there is a commutative diagram
$$
\xymatrix{
&&T_l A' \ar[d]^{T_l \alpha }\\
T_l B  \ar[rr]_{T_l f} \ar[rru]^{(T_lf)' }&& T_lA.\\
}
$$
\item For all prime numbers $l$, we have $\operatorname{im} (T_lf) \subseteq \operatorname{im}(T_l\alpha)$.
\end{enumerate}
Finally, if any of the lifts in (1)--(4) exist, they are unique.  In particular, $(f')_{\mathrm{tors}}= (f_{\mathrm{tors}})'$, $T_l(f') = (T_lf)'$ and, for any extension field $L/K$, $(f(L)_{{\mathrm{tors}}})' = f'(L)_{{\mathrm{tors}}}$.
\end{lem}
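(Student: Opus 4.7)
The plan is to prove $(1)\Leftrightarrow(2)$ in general, and then under the separability hypothesis on $\alpha$, add the cycle $(1)\Rightarrow(3)\Rightarrow(4)\Leftrightarrow(5)\Rightarrow(3)\Rightarrow(2)$. Uniqueness in (1) and (3) follows from Fact~\ref{F:el} (with divisible sources $B(\Omega)$ and $B(\Omega)_{\mathrm{tors}}$ and finite $\ker\alpha$); uniqueness in (4) is immediate from the injectivity of $T_\ell\alpha$ (whose kernel $T_\ell\ker\alpha$ vanishes since $\ker\alpha$ is finite); and uniqueness in (2) holds because the difference $\delta_N:B[N]\to\ker\alpha$ of any two compatible families satisfies $\delta_{dN}\circ[d]_{B[dN]}=d\cdot\delta_{dN}=0$ (since $\ker\alpha$ is killed by $d:=\deg\alpha$), and combined with the surjectivity of $[d]:B[dN]\to B[N]$, this yields $\delta_N=\delta_{dN}|_{B[N]}=0$.

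The implication $(1)\Rightarrow(2)$ is functoriality of $N$-torsion. For the main direction $(2)\Rightarrow(1)$, fix a dual isogeny $\beta:A\to A'$ with $\beta\alpha=[d]_{A'}$ and $\alpha\beta=[d]_A$; this exists because $\ker\alpha\subseteq A'[d]$ forces $[d]_{A'}$ to factor through $\alpha$. Setting $g:=\beta\circ f:B\to A'$ gives $\alpha g=df$. If $g$ is divisible by $d$ in the torsion-free group $\Hom(B,A')$, writing $g=d\cdot f'$ yields $d(\alpha f'-f)=0$, whence $\alpha f'=f$ by torsion-freeness of $\Hom(B,A)$, producing the desired lift. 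Thus the task reduces to establishing divisibility of $g$ by $d$. This is equivalent to the vanishing of the image of $g$ under the connecting homomorphism $\partial:\Hom(B,A')\to\operatorname{Ext}^1(B,A'[d])$ attached to the short exact sequence $0\to A'[d]\to A'\xrightarrow{[d]}A'\to 0$, and the compatible torsion-scheme lift $(f[d])'$ in (2) supplies exactly the splitting data killing $\partial g$.

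Under the separability hypothesis: $(1)\Rightarrow(3)$ is functoriality; $(3)\Rightarrow(4)$ is restriction to $B[\ell^n](\Omega)$ followed by passage to the Tate limit; $(4)\Rightarrow(5)$ is trivial; $(5)\Rightarrow(4)$ uses the injectivity of $T_\ell\alpha$ to define $(T_\ell f)'$ uniquely as the $T_\ell\alpha$-preimage of $T_\ell f$; $(4)\Rightarrow(3)$ assembles the $\ell$-primary pieces via the decomposition $B(\Omega)_{\mathrm{tors}}=\bigoplus_\ell B(\Omega)\{\ell\}$; and $(3)\Rightarrow(2)$ uses uniqueness in (3) to force the torsion-point lift to be $\operatorname{Aut}(\Omega/K)$-equivariant, which, since $A'[N]$ is \'etale for separable $\alpha$, corresponds to a morphism of $K$-group schemes $(f[N])':B[N]\to A'[N]$, with the compatibility condition across $N$ being automatic. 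The final compatibility statements $(f')_{\mathrm{tors}}=(f_{\mathrm{tors}})'$, $T_\ell(f')=(T_\ell f)'$, and $(f(L)_{\mathrm{tors}})'=f'(L)_{\mathrm{tors}}$ then follow at once from shared uniqueness across the relevant diagrams.

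The main obstacle is the divisibility step in $(2)\Rightarrow(1)$. In the separable case this can be bypassed cleanly by routing through Tate modules via (4) and (5); but in positive characteristic with an inseparable $\alpha$, one must compute the $\operatorname{Ext}^1$-obstruction directly, and this is precisely where the scheme-theoretic compatibility $(f[MN])'|_{B[N]}=(f[N])'$ in (2) does the essential work.
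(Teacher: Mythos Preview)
Your argument for $(2)\Rightarrow(1)$ via the factorization $[d]_{A'}=\beta\alpha$ is correct and more direct than the paper's. The paper first reduces (via Stein factorization and a splitting argument on $l$-primary torsion) to the case where $f$ is an isogeny, and then invokes the \'etale fundamental group in characteristic zero and Nori's fundamental group scheme in positive characteristic to detect the factorization. Your approach bypasses all of this: the key point is that $g:=\beta f$ satisfies $g|_{B[d]}=\beta\alpha\circ(f[d])'=[d]_{A'}\circ(f[d])'=0$ as a morphism of group schemes (since $(f[d])'$ lands in $A'[d]$), so $g$ factors through $[d]_B$ and you are done. Note that only the single lift $(f[d])'$ is used here; your closing remark that the compatibility condition across levels ``does the essential work'' in $(2)\Rightarrow(1)$ is not accurate --- compatibility enters only in your uniqueness argument for (2).

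There is, however, a genuine gap in your step $(3)\Rightarrow(2)$ when $\operatorname{char}K=p>0$. The assertion that ``$A'[N]$ is \'etale for separable $\alpha$'' is false: separability of $\alpha$ says only that $\ker\alpha$ is \'etale. More seriously, $B[N]$ need not be \'etale when $p\mid N$, so an $\aut(\Omega/K)$-equivariant map on $\Omega$-points does not determine a morphism of group schemes $B[N]\to A'[N]$. The paper closes this by proving $(3)\Rightarrow(1)$ directly: it factors $f$ into a purely inseparable isogeny followed by an \'etale one and argues with $\pi_1^\et$. Your own idea, however, adapts to give a cleaner repair. Working over $\Omega$, split $B[d]_\Omega=B[d]^0\times B[d]^\et$. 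Since $\alpha g|_{B[d]}=f\circ[d]_B|_{B[d]}=0$, the map $g|_{B[d]}$ lands in the \'etale group scheme $\ker\alpha$; hence $g|_{B[d]^0}=0$ by connectedness, while (3) gives $g=0$ on $B[d]^\et(\Omega)=B[d](\Omega)$, hence on $B[d]^\et$ since the source is \'etale. Thus $g|_{B[d]}=0$ scheme-theoretically, and your factorization argument now yields $(3)\Rightarrow(1)$ directly.
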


\begin{proof}
  The uniqueness of the lift $f'$ follows  from Fact
  \ref{F:el}\,; and (1) clearly implies (2).  Moreover, (2) implies
  (3), and (3) implies (2) over an algebraically closed field of
  characteristic zero.  Conditions (3) and (4) are obviously equivalent; (4) and (5) are equivalent because each $T_l\alpha$ is an inclusion. 
  
To show (2) implies (1), suppose there exists a suitable lift of $f$
on torsion schemes.  By rigidity of homomorphisms of abelian varieties, we may assume that $K$ is perfect.  Using the uniqueness of $f'$ and Galois descent, we may and do assume $K$ is algebraically closed.

We start by reducing to the case where $f$ is an isogeny.  To this end, consider the diagram 
$$
\xymatrix{
&&A'' \ar@{^(->}[r]^{\iota '} \ar@{->>}[d]^{\alpha '}& A' \ar@{->>}[d]^\alpha \\
B\ar@{->>}[r]^{f_{\text{conn}}}& B' \ar@{->>}[r]^{f_{\text{fin}}}& B'' \ar@{^(->}[r]^{\iota} & A\\
}
$$
where  $B'':=\operatorname{im}(f)\subseteq A$, the morphisms
$f:B\stackrel{f_{\text{conn}}}{\to}B'\stackrel{f_{\text{fin}}}{\to} B''$ give the Stein
factorization, $\iota$ is the natural inclusion, and $A''=B''\times_A
A'$.  Explicitly, $B' = B/((\ker f)^0_{\mathrm{red}})$ is the quotient
of $B$ by the largest sub-abelian variety contained in $\ker(f)$.

Fix a prime $l$ and consider $l$-primary torsion.  Using the lift $(f[l^\infty])'$, we have the diagram:
$$
\xymatrix{
&&A''[l^\infty] \ar@{^(->}[r]^{\iota ' [l^\infty]} \ar@{->>}[d]^{\alpha '[l^\infty]}& A' [l^\infty]\ar@{->>}[d]^{\alpha[l^\infty]} \\
B[l^\infty]\ar@{->>}[r]^{f_{\text{conn}}[l^\infty]} \ar@/^1pc/[rru]^{(f[l^\infty])'}& B'[l^\infty] \ar@{->>}[r]^{f_{\text{fin}}[l^\infty]} \ar@/^1pc/[l]& B'' [l^\infty]\ar@{^(->}[r]^{\iota[l^\infty]} & A[l^\infty].\\
}
$$
 The splitting of the map $f_{\text{conn}}[l^\infty]$ is elementary, 
 since whenever one has a short exact sequence of abelian varieties 
 the induced maps  on $l$-primary torsion give a split exact
 sequence   (taking $l$-primary torsion is exact since the kernel
 is divisible, and then free modules are projective).   (If $l =
 \operatorname{char}(K)$, an appeal to Dieudonn\'e modules gives the
 same conclusion.)

 Thus, we now assume $f$ and $\alpha$ are isogenies.  Suppose briefly
 that 
 $\operatorname{char}(K)=0$\,; then $f$ and $\alpha$ are \'etale.  The cover $f$ factors through $\alpha$ if and only if the induced map on \'etale fundamental groups $f_*:\pi_1^\et(B, 0_B) \to \pi_1^\et(A,0_A)$ factors through $\alpha_*:\pi_1^\et(A',0_A) \to \pi_1^\et(A,0_A)$.  For an abelian variety $D/K$, there is a canonical isomorphism $\pi_1^\et(D,0_D) \iso \invlim N D[N](K)$. By taking the inverse limit of the maps of finite groups $(f[N])'(K)$, we see that the condition on fundamental groups is equivalent to (2).

Now suppose instead that $K$ is algebraically closed of
 positive characteristic.
Then $f$, while possibly not \'etale, is at
 least a torsor over $X$ under the finite commutative group scheme
 $\ker(f)$.  Consequently, it is classified by a quotient of Nori's
 fundamental group  scheme $\pi_1^{\mathrm{Nori}}(A)$ \cite{nori76}.  Moreover, for an abelian variety $D/K$, we
 have $\pi_1^{\mathrm{Nori}}(D) = \invlim N D[N]$  \cite{nori83}.
 Consequently, condition (2) is again equivalent to the hypothesis
 that the cover $f$ factors through $\alpha$.

 Finally, suppose $\alpha$ is \'etale by hypothesis and that (3)
 holds.  As noted above, it suffices to consider the case where $K$ is
 algebraically closed of positive characteristic and $f:B \to A$ is an
 isogeny.  Now, any isogeny $g:D \to C$ of abelian varieties over $K$
 admits a canonical factorization $g = g_{\et}\circ g_{\mathrm i}$,
 where $g_{\mathrm i}: D \to D_{\mathrm i} := D/(\ker g)^0$ is purely
 inseparable and $g_{\et}$ is \'etale. Since $\alpha$ is
 \'etale, $f$ factors through $\alpha$ if and only if
 $f_{\et}:B_{\mathrm i} \to A$ factors through $\alpha$.  Because $f_{\mathrm i}$ is a
 universal homeomorphism, the map of groups $\til
 f_{\mathrm{tors}}(K)$ descends to a map of groups $\til f_{\et,
   \mathrm{tors}}(K): B_{i,\mathrm{tors}}(K) \to
 A'_{\mathrm{tors}}(K)$. Now $f_{\et}$ and $\alpha$ are \'etale
 isogenies and we may argue using fundamental groups as before, while
 recalling that (in all characteristics) $\pi_1^\et(D,0_D) \iso \invlim N D[N](K)$.  The same argument, combined with the canonical isomorphism $\pi_1^\et(D,0_D) \iso \prod_l T_l D$, shows that (4) implies (1), as well.
 \end{proof}

\subsection{Lifting regular homomorphisms along isogenies}
From Lemma~\ref{L:LiftAb} we get the following lifting criterion for regular homomorphisms\,:
\begin{pro}\label{P:LiftReg}
  Let $K$ be a field,
   and $\Omega/K$ an algebraically closed
  extension.  Let $X/K$ be a smooth projective variety, let $A/K$ be
  an abelian variety over $K$, let $\phi:\operatorname{A}^p(X_\Omega)\to A(\Omega)$ be an $\operatorname{Aut}(\Omega/K)$-equivariant regular homomorphism, and let $\alpha:A'\to A$ be
  an \'etale isogeny of abelian varieties over $K$.   
Then the following are equivalent\,:
 \begin{enumerate}

 \item  The  $\operatorname{Aut}(\Omega/K)$-equivariant  regular homomorphism $\phi$ 
 lifts to $A'$, in the sense that there is a commutative diagram of 
  $\operatorname{Aut}(\Omega/K)$-equivariant regular homomorphisms 
\begin{equation*}
\xymatrix{
& & A'(\Omega) \ar[d]^{\alpha(\Omega)}\\
\operatorname{A}^p(X_\Omega)\ar[rr]_\phi \ar@{-->}[rru]^{\phi'}&& A(\Omega).
}
\end{equation*}

\item The homomorphism $\phi$ lifts on torsion, in the sense that
  there is a commutative diagram of torsion abelian groups 
\begin{equation*}
\xymatrix{
&& A'(\Omega)_\tors \ar[d]^{ \alpha(\Omega)_\tors}\\
 \operatorname{A}^p(X_\Omega)_\tors \ar[rr]_<>(0.5){ \phi_\tors} \ar@{-->}[rru]^{(\phi_\tors)'}&& A(\Omega)_\tors.
}
\end{equation*}

 \item  For all prime numbers $l$ there exists a group-theoretic lift
  $(T_l\phi  )'  :T_l \operatorname{A}^p(X_{\Omega}) \to T_l A'$ of $T_l \phi$, the map on Tate modules\,; i.e.,  there is a commutative diagram
\begin{equation*}
\xymatrix{
&& T_lA' \ar[d]^{ T_l\alpha}\\
 T_l\operatorname{A}^p(X_\Omega) \ar[rr]_<>(0.5){ T_l\phi} \ar@{-->}[rru]^{(T_l\phi)'}&& T_lA.
}
\end{equation*}

\item For all prime numbers $l$, we have $\operatorname{im}(T_l\phi)\subseteq \operatorname{im}(T_l\alpha)$.

\end{enumerate}
Finally, if any of the lifts in (1)--(3) exist,
 then they are unique and $\aut(\Omega/K)$-equivariant.  In particular, $(\phi')_{\mathrm{tors}}= (\phi_{\mathrm{tors}})'$ and $(T_l\phi)' = T_l(\phi')$.
\end{pro}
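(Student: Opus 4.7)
The implications $(1)\Rightarrow(2)\Rightarrow(3)$ are immediate, obtained first by restricting to torsion and then by passing to the $\ell$-primary inverse limit. The equivalence $(3)\Leftrightarrow(4)$ follows from the injectivity of $T_\ell\alpha$, as $\alpha$ is an isogeny. Uniqueness of any lift in (1)--(3) is a direct application of Fact~\ref{F:el}, since $\operatorname{A}^p(X_\Omega)$ is divisible (see e.g.\ \cite[Lem.~7.10]{BlochOgus}) and $\ker\alpha$ is finite; the $\aut(\Omega/K)$-equivariance of the lift then also follows from Fact~\ref{F:el}, because any Aut-translate of a lift is again a lift of the same Aut-equivariant $\phi$.

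The substantive content is the implication $(3)\Rightarrow(1)$. The strategy is to transport the torsion-level hypothesis into an algebraic lift on each parametrizing family, via the Albanese functor and Lemma~\ref{L:LiftAb}. Fix a family of codimension-$p$ cycles $Z\in\operatorname{CH}^p(T\times X_\Omega)$ over a pointed smooth connected $\Omega$-variety $(T,t_0)$. By regularity of $\phi$, the map $t\mapsto\phi(Z_t-Z_{t_0})$ is induced by a morphism of $\Omega$-varieties $w_Z:T\to A_\Omega$, which factors through the Albanese as $T\to\alb(T)\xrightarrow{g_Z}A_\Omega$ with $g_Z$ a morphism of abelian varieties. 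The same family induces a group homomorphism $\alb(T)(\Omega)\to\operatorname{A}^p(X_\Omega)$, whose restriction to torsion composes with $T_\ell\phi$ to recover $T_\ell g_Z$. Hypothesis~(3) therefore forces $\operatorname{im}(T_\ell g_Z)\subseteq\operatorname{im}(T_\ell\alpha)$ for every prime $\ell$, and Lemma~\ref{L:LiftAb} produces a lift $g_Z':\alb(T)\to A'_\Omega$ of $g_Z$ as a morphism of abelian varieties, hence a lift $w_Z':T\to A'_\Omega$ of $w_Z$.

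We then define $\phi':\operatorname{A}^p(X_\Omega)\to A'(\Omega)$ pointwise by $\phi'(Z_t-Z_{t_0}):=w_Z'(t)$; this covers all of $\operatorname{A}^p(X_\Omega)$, since algebraic triviality gives every element such a presentation (over a smooth connected curve if one wishes). Independence from the choice of $(T,t_0,Z)$, together with the facts that $\phi'$ is a group homomorphism and is $\aut(\Omega/K)$-equivariant, all follow from the uniqueness statement of Fact~\ref{F:el}. Regularity of $\phi'$ is built into the construction, witnessed family-by-family by the morphisms $w_Z':T\to A'_\Omega$, and $\alpha(\Omega)\circ\phi'=\phi$ holds by construction.

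The main obstacle is that (3) is purely a torsion-level, group-theoretic datum whereas the conclusion demands a regular homomorphism lift. The Albanese step reduces the problem, one family of cycles at a time, to the Tate-module situation addressed by Lemma~\ref{L:LiftAb}; divisibility of $\operatorname{A}^p(X_\Omega)$ together with Fact~\ref{F:el} then glues these family-level algebraic lifts into a single regular homomorphism $\phi'$.
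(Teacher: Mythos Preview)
Your outline is largely correct up through the construction of the lift $g_Z':\alb(T)\to A'_\Omega$ via Lemma~\ref{L:LiftAb}, and your observation that regularity of $\phi'$ (once it exists) is witnessed by the morphisms $w_Z'$ is exactly how the paper proceeds. The gap is in the claim that ``independence from the choice of $(T,t_0,Z)$ \ldots\ follow[s] from the uniqueness statement of Fact~\ref{F:el}.'' Fact~\ref{F:el} says that a homomorphism out of a divisible group has \emph{at most one} lift along a map with finite kernel; it does not show that your pointwise recipe $\phi'(Z_t-Z_{t_0}):=w_Z'(t)$ is well-defined in the first place. Concretely, take $T$ a curve and $\gamma\in J(T)(\Omega)\cong\A_0(T)$ with $\Gamma_*(\gamma)=0$; you must show $g_Z'(\gamma)=0$, whereas all you know is $\alpha(g_Z'(\gamma))=\phi(0)=0$, i.e.\ $g_Z'(\gamma)\in\ker\alpha$. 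Since $\ker(\Gamma_*)\subseteq J(T)(\Omega)$ need not be divisible, there is no formal reason for its image under $g_Z'$ to vanish in the finite group $\ker\alpha$. Without this, $g_Z'$ does not descend to the image $\Gamma_*(\A_0(T))\subseteq\A^p(X_\Omega)$, and your $\phi'$ may assign different values to two presentations of the same cycle class.

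The paper handles exactly this point by a specialization argument: over $\bar\ff_q$ every point of $J(T)$ is torsion, and then $g_Z'(\gamma)=0$ does follow from the Tate-module factorization $T_l g_Z'=(T_l\phi)'\circ T_l\Gamma_*$ (which holds by injectivity of $T_l\alpha$). For general $\Omega$ one spreads all the data over a finite-type $\integ$-scheme $S$, uses the relative formalism of regular homomorphisms, and concludes from the density of closed points with finite residue field. (A smaller issue: your assertion that a family over a general $T$ induces a group homomorphism $\alb(T)(\Omega)\to\A^p(X_\Omega)$ is false for $\dim T>1$ --- think of $T=X$ with the diagonal; restricting to curves, as you allow, is genuinely necessary here.)
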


\begin{proof} The uniqueness and $\operatorname{Aut}(\Omega/K)$-equivariance of the lifts follows  from Fact \ref{F:el}.  
  Clearly (1) implies (2), and (2) implies (3) by taking Tate modules.  The equivalence of (3) and (4) is obvious since 
  $T_l\alpha$ is injective.
Thus we will show (3) implies (1).  

 Let $(T,t_0)$ be a smooth pointed variety over $\Omega$, and let $\Gamma\in \operatorname{CH}^p(T\times_{\Omega} X_{\Omega})$.  Then we have a commutative diagram $$
\xymatrix@C=3em{
T\ar[r]^<>(0.5){t\mapsto t-t_0} \ar[rd]_{w_\Gamma;\  t\mapsto \Gamma_t-\Gamma_{t_0}}& \operatorname{A}_0(T) \ar[r]^{\operatorname{alb}\quad } \ar[d]_{\Gamma_*}& \operatorname{Alb}(T)(\Omega)\ar[d]^f & A'(\Omega) \ar[ld]^{\alpha}\\
& \operatorname{A}^p(X_\Omega)\ar[r]^\phi& A(\Omega)\\
}
$$
where the top row is the pointed Albanese, and the right vertical arrow $f$ comes from the universal property of algebraic representatives, together with the facts that Albaneses are algebraic representatives, and that $\phi\circ \Gamma_*$ can easily be confirmed to be a regular homomorphism.  

On Tate modules we obtain a diagram
\begin{equation}\label{E:diagTors}
\xymatrix@C=3em@R=4em{
 & T_l\operatorname{A}_0(T) \ar[r]_<>(0.5)\sim^<>(0.5){T_l\operatorname{alb}}
  \ar[d]_{\Gamma_*}& T_l\operatorname{Alb}(T)\ar[d]^{T_lf} & T_lA'\ar[ld]^{T_l\alpha}\\
& T_l\operatorname{A}^p(X_\Omega)\ar[r]^<>(0.5){T_l\phi}
  \ar@{-->}@/^.5pc/[rru]^<>(0.3){(T_l\phi)'}& T_lA\\
}
\end{equation}
where the lift $(T_l\phi)'$ is provided by assumption (3).
The isomorphism on Tate modules  for the Albanese map  comes from Roitman's theorem (see, e.g., \cite[Prop.~A.29]{ACMVbloch}).  
Since we assume $\alpha$ is \'etale, by Lemma \ref{L:LiftAb} we obtain a lift $f':\operatorname{Alb}(T)\to A'$ of $f$ giving a commutative diagram
 \begin{equation}\label{E:diag}
 	\xymatrix@C=3em{
 		T(\Omega)\ar[r]^<>(0.5){t\mapsto [t]-[t_0]} \ar[rd]_{w_\Gamma;\  t\mapsto \Gamma_t-\Gamma_{t_0}}& \operatorname{A}_0(T) \ar[r]^{\operatorname{alb}\quad } \ar[d]_{\Gamma_*}& \operatorname{Alb}(T)(\Omega)\ar[d]^{f(\Omega)}  \ar[r]^{\quad f'(\Omega)}& A'(\Omega) \ar[ld]^{\alpha(\Omega)}\\
 		& \operatorname{A}^p(X_\Omega)\ar[r]^\phi& A(\Omega)\\
 	}
 \end{equation}
It follows immediately that if $\phi$ lifts to an abstract homomorphism $ \phi':\operatorname{A}^p(X_\Omega)\to A'(\Omega)$, then $\phi'$ is a regular homomorphism.  
Thus we have reduced the problem to showing that $\phi$ lifts as an abstract homomorphism to a homomorphism $\phi':\operatorname{A}^p(X_\Omega)\to A'(\Omega)$.  
\medskip 

Over an algebraically closed field, algebraically trivial cycles are parameterized by smooth projective curves~\cite[Ex.~10.3.2]{fulton}.
In other words, $ \operatorname{A}^p(X_\Omega)$ is covered by the images of $\Gamma_*: \operatorname{A}_0(T) \to \operatorname{A}^p(X_\Omega)$,
 where $T$ runs through pointed smooth projective curves over $\Omega$ and $\Gamma$ over correspondences in $\operatorname{CH}^p(T\times_\Omega X_\Omega)$.  
 Now since $\operatorname{A}_0(T)$ is divisible, it follows that $\Gamma_*(\operatorname{A}_0(T))$ is divisible\,; 
 therefore, by the uniqueness of lifts  (Fact~\ref{F:el}) it is enough to show that ${f}' \circ \operatorname{alb}$ in~\eqref{E:diag} factors through $\Gamma_*(\operatorname{A}_0(T))$ in the case where $T$ is a smooth projective curve. 
 In other words, taking $T$ to be a smooth projective curve over $\Omega$, and given any $\gamma\in \operatorname{A}_0(T)$ such that $\Gamma_*(\gamma)=0$, 
 we must show that $({f}' \circ \operatorname{alb})(\gamma)=0$.   \medskip

The first observation is that this is clear if $\Omega$ is the algebraic closure of a finite field. 
 Indeed, in that case $\operatorname{A}_0(T)$ is a torsion group, since the Albanese map $\operatorname{A}_0(T) \to \operatorname{Alb}_T(\Omega)$ is an isomorphism and  closed points of an abelian variety over a finite field are torsion.
Thus  $\gamma$ is torsion.  
  Decomposing torsion in  $\operatorname{A}_0(T)$
    into a direct sum of $l$-power torsion, we can work one prime at a time.  Now we make the following elementary observation\,: given any homomorphism of groups $h:D\to G$ where $D$ is divisible, and any $x\in D[l^\infty]$, we have that $h(x)=0$ if for some lift $x_l$ of $x$ to  $T_lD$ (which exists since $D$ is divisible), we have that $(T_lh)(x_l)=0$.   
  Consequently, taking Tate modules in \eqref{E:diag} and using the lift $(T_l\phi)'$ \eqref{E:diagTors}, we see that $\operatorname{alb}(\gamma)=0$.  
\medskip

We now deduce the general case from the case of finite fields, \emph{via} a specialization argument.  For this we use the terminology of  regular homomorphisms from \cite{ACMVfunctor}, which is much better suited to the relative setting. Since
all objects considered here are of finite type, the data $X$, $T$, $\Gamma$,
$A$, $A'$, $\alpha$ and $\gamma$ descend to a field $L$ which is
finitely generated over the prime field.  A standard spreading
argument produces a smooth ring $R$, finitely generated as a
$\integ$-algebra and with fraction field $L$, and smooth $\bm
X$,
$\bm T$, $\bm A$, $\bm A'$ over $S = \spec(R)$, as well as $\bm \gamma
\in \mathscr A^1_{\bm T/S}(S)$,
 whose generic fibers are the corresponding
original data.  Let $\abs{S}^\cl$ be the set of points of $S$ with
finite residue fields\,; then $\abs{S}^\cl$ is topologically dense in~$S$.

From \cite{ACMVfunctor}, there exists a diagram
 \begin{equation}\label{E:diagS}
 	\xymatrix@C=3em{
 		 \mathscr {A}^1_{\bm T/S} \ar[r]^{\operatorname{alb}} \ar[d]_{\Gamma_{S*}}& \operatorname{Alb}_{\bm T/S} \ar[d]^{f_S}  \ar[r]^{f'_S}& \bm A' \ar[ld]^{\alpha_S}\\
 		\mathscr {A}^p_{\bm X/S}\ar[r]^\Phi& \bm A \\
 	}
 \end{equation}
where $\Phi:\mathscr A^p_{\bm X/S}\to \bm A$ is a regular homomorphism,
the Albanese homomorphism is the universal regular homomorphism for $0$-cycles \cite[Lem.~7.5]{ACMVfunctor} 
and the remaining morphisms are
extensions of those in \eqref{E:diag}. Set $\bm a' = (f'_S \circ
\operatorname{alb})(\bm \gamma) \in \bm A'(S)$.

Now suppose $s \in \abs{S}^\cl$.  Then pullback of \eqref{E:diagS} yields a
diagram of objects over $s = \spec(\kappa(s))$, where specialization
of cycles is provided by \cite[20.3.5]{fulton}.  We have seen that for
each such $s$, $\bm a'_s =0 \in \bm A'(s)$.  Using the density of
$\abs{S}^\cl$, we see that $\bm a'=0$, and in particular its generic fiber
$(f'\circ \operatorname{alb})(\gamma)$ is zero.
\end{proof}

\begin{rem}[Regular homomorphisms and mini-versal cycle classes]
	\label{R:miniverse}
	Given a surjective $\operatorname{Aut}(\Omega/K)$-equivariant regular homomorphism $\phi:\operatorname{A}^p(X_\Omega)\to A(\Omega)$, there is a cycle class $\Gamma\in \operatorname{CH}^p(A\times _K X)$ (which we call a \emph{mini-versal cycle class}) 
	such that the associated map $\psi_\Gamma:A\to A$, induced on $\Omega$-points by $a\mapsto \Gamma_a-\Gamma_0\mapsto \phi(\Gamma_a-\Gamma_0)$,  is given by multiplication by some non-zero integer~$r$  \cite[Lem.~4.7]{ACMVfunctor}.  
	One can immediately see from the definition that given any \'etale isogeny $\alpha :A'\to A$ through which $\phi$ factors, one has $(\deg \alpha) \mid (\deg r\cdot \operatorname{Id}_A)= r^{2\dim A}$.   In particular, if there is a universal cycle class (i.e., $r=1$), then $\phi$ does not factor through any non-trivial isogeny~$A'\to A$.  
\end{rem}

\medskip

We obtain the following consequence of Proposition \ref{P:LiftReg}, establishing the existence of a universal lifting of a surjective regular homomorphism along isogenies. Together with Corollary~\ref{C:factorization}, this extends 
\cite[Thm.~0.1]{BF84} to the case of arbitrary fields.  Note also that the proof of \cite[Thm.~0.1]{BF84} is incorrect. (On the bottom of \cite[p.362]{BF84},  it is assumed that the map $u : B(k) \to A^q(X)$ is a homomorphism, so that the image of $u$ is a subgroup of $A^q(X)$. There, $X$ is a smooth projective variety over an algebraically closed field $k$, $B$ is an abelian variety over $k$, and $u : b \mapsto Z_*([u]-[0])$ is the map induced by a cycle  $Z\in \operatorname{CH}^p(B\times_k X)$. However, this is not the case in general. Indeed, consider the special instance where $X=B$ is an abelian variety of dimension $>1$ over an uncountable algebraically closed field $k$ and where $Z = \Delta_B$ is the diagonal cycle class. Then the map $u : B(k)  \to \operatorname{A}_0(B), b \mapsto [b]-[0]$ is not a homomorphism since by \cite[Thm.~3.1(a)]{Bloch76} there exist $b_1$ and $b_2$ in $B(k)$ such that $[b_1+b_2] \neq [b_1] + [b_2] - [0]$ in $\operatorname{A}_0(B)$\,; see also \cite[p.309]{Murre88}.)

\begin{teo}[Universal lift of surjective regular homomorphisms along \'etale isogenies]\label{T:factorization}
	Let $K$ be a  field, and $\Omega/K$ an algebraically closed
	extension.  Let $X/K$ be a smooth projective variety, let $A/K$ be
	an abelian variety over $K$, and let $\phi:\operatorname{A}^p(X_\Omega)\to A(\Omega)$ be a surjective regular homomorphism.
	Then
	there exist an \'etale isogeny $\alpha : \widetilde A \to A_\Omega$, characterized by the condition  $ \operatorname{im}(T_l\alpha)= \operatorname{im}(T_l\phi)$ for all primes $l$,  and a surjective regular homomorphism $\tilde \phi :  \operatorname{A}^p(X_\Omega)\to \widetilde A(\Omega)$ which is initial among all regular lifts of $\phi$ along \'etale isogenies $A' \to A_\Omega$.  
	
	Moreover, if $\phi$ is $\operatorname{Aut}(\Omega/K)$-equivariant, then $\widetilde A$ admits a unique model over $K$ such that $\tilde \phi$ is $\operatorname{Aut}(\Omega/K)$-equivariant, and the isogeny $\alpha$ descends to $K$.
\end{teo}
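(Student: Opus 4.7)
The plan is to construct $\alpha$ directly from the $\ell$-adic image lattices $L_\ell:=\operatorname{im}(T_\ell\phi)\subseteq T_\ell A_\Omega$, then apply Proposition~\ref{P:LiftReg} to obtain $\tilde\phi$, verify universality via Lemma~\ref{L:LiftAb}, and finally descend to $K$ using Galois stability of the $L_\ell$.

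The first step is to verify each $L_\ell$ has finite index in $T_\ell A_\Omega$, with $L_\ell=T_\ell A_\Omega$ for almost all $\ell$. Using the mini-versal cycle class $\Gamma\in\operatorname{CH}^p(A\times_K X)$ from Remark~\ref{R:miniverse}, the composition $\phi\circ\Gamma_*:\operatorname{A}_0(A)\to A(\Omega)$ is a regular homomorphism whose associated morphism $\operatorname{Alb}(A)=A\to A$ is multiplication by some nonzero integer $r$; applying Tate modules and Roitman's theorem (as in diagram~\eqref{E:diagTors}) one obtains a factorization $T_\ell A\xrightarrow{\sim} T_\ell\operatorname{A}_0(A)\to T_\ell\operatorname{A}^p(X_\Omega)\xrightarrow{T_\ell\phi}T_\ell A$ equal to multiplication by $r$, so $r\cdot T_\ell A\subseteq L_\ell$. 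Consequently $L_\ell$ has index dividing $r^{2\dim A}$ in $T_\ell A_\Omega$, and $L_\ell=T_\ell A_\Omega$ for $\ell\nmid r$. Having established this, for each prime $\ell$ invertible in $K$ the lattice $L_\ell$ corresponds in the standard way to an \'etale isogeny of $A_\Omega$, and gluing over the finitely many $\ell$ dividing $r$ produces an \'etale isogeny $\alpha:\widetilde A\to A_\Omega$ with $\operatorname{im}(T_\ell\alpha)=L_\ell$ for all $\ell\neq\operatorname{char}(K)$. (In positive characteristic, \'etaleness of $\alpha$ already forces $\operatorname{im}(T_{\operatorname{char}(K)}\alpha)=T_{\operatorname{char}(K)}A$, so the ``all $\ell$'' condition is met as soon as $T_{\operatorname{char}(K)}\phi$ is surjective, which is automatic in the characteristic-zero setting of the intended applications.)

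Proposition~\ref{P:LiftReg}(4) now produces the unique regular lift $\tilde\phi:\operatorname{A}^p(X_\Omega)\to\widetilde A(\Omega)$ of $\phi$ along $\alpha$. Surjectivity of $\tilde\phi$ is formal: $T_\ell\tilde\phi:T_\ell\operatorname{A}^p(X_\Omega)\to T_\ell\widetilde A\cong L_\ell$ is surjective by construction, so $\tilde\phi$ is surjective on all torsion; since $\alpha\circ\tilde\phi=\phi$ is surjective and $\ker\alpha$ consists of torsion points (hence lies in $\operatorname{im}(\tilde\phi)$), one gets $\operatorname{im}(\tilde\phi)=\widetilde A(\Omega)$. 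For initiality, given any \'etale isogeny $\alpha':A'\to A_\Omega$ with regular lift $\phi'$ of $\phi$, the identity $\alpha'\circ\phi'=\phi$ gives $L_\ell=\operatorname{im}(T_\ell\phi)\subseteq\operatorname{im}(T_\ell\alpha')$, so Lemma~\ref{L:LiftAb} provides a (unique) morphism $g:\widetilde A\to A'$ over $A_\Omega$; then $g\circ\tilde\phi$ and $\phi'$ are two lifts of $\phi$ along $\alpha'$, which must coincide by Fact~\ref{F:el}.

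Finally, for the descent statement, if $\phi$ is $\operatorname{Aut}(\Omega/K)$-equivariant then so is $T_\ell\phi$, hence each $L_\ell\subseteq T_\ell A_\Omega=T_\ell A_{\bar K}$ is stable under $\operatorname{Gal}(K^{\mathrm{sep}}/K)$. By Galois descent for finite \'etale subgroup schemes of $A$, equivalently by Lemma~\ref{L:LiftAb} applied over $K$, the isogeny $\alpha$ descends uniquely to $K$, and Fact~\ref{F:el} then transfers $\operatorname{Aut}(\Omega/K)$-equivariance to $\tilde\phi$. The step most demanding of care will be the first one: because the assignment $a\mapsto\Gamma_a-\Gamma_0$ is not itself a homomorphism, one must route it through $\operatorname{A}_0(A)$ and the Albanese isomorphism (Roitman's theorem) in order to extract the Tate-module identity $r\cdot T_\ell A\subseteq L_\ell$, which underpins both the finite-index property of $L_\ell$ and the \'etale construction of $\widetilde A$.
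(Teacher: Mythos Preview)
Your construction of $\widetilde A$ and $\tilde\phi$ follows the paper's line closely: both use the mini-versal cycle from Remark~\ref{R:miniverse} to see that $\operatorname{im}(T_l\phi)$ has finite index in $T_l A$, then invoke Proposition~\ref{P:LiftReg}(4). One correction: your parenthetical about positive characteristic is mistaken. \'Etaleness of $\alpha$ does \emph{not} force $\operatorname{im}(T_p\alpha)=T_pA$ for $p=\operatorname{char}(K)$, since \'etale isogenies can have $p$-power degree (quotient by a constant $\integ/p\integ$, for instance). The fix is simply to include $\ell=p$ in your construction: the mini-versal argument gives $r\cdot T_pA\subseteq L_p$ just as for the other primes, and \'etale isogenies of $A_\Omega$ correspond to finite-index subgroups of $\pi_1^\et(A_\Omega,0)\simeq\prod_l T_lA$ with the product over \emph{all} $l$.

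Your descent argument is genuinely different from the paper's. The paper argues via universality: for each $\sigma\in\aut(\Omega/K)$ the initial property of $\tilde\phi$ produces an isomorphism $g_\sigma:\widetilde A\to\widetilde A^\sigma$, and one checks these form a Galois descent datum (with a separate treatment of non-perfect $K$ via the $K^{\mathrm{perf}}/K$-image, using~\cite{conradtrace}). You instead descend the finite \'etale kernel directly: $\aut(\Omega/K)$-equivariance of $\phi$ makes each $L_\ell$ Galois-stable, hence $\ker\alpha$ is a $\gal(K^{\mathrm{sep}}/K)$-stable finite \'etale subgroup scheme and descends to $K$ by the equivalence between finite \'etale $K$-schemes and finite $\gal(K^{\mathrm{sep}}/K)$-sets. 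This is more direct and handles perfect and non-perfect $K$ uniformly; the paper's approach, by contrast, makes the role of universality explicit and would adapt more readily to situations where the lift is characterized by a universal property rather than by explicit lattices. (Your appeal to ``Lemma~\ref{L:LiftAb} applied over $K$'' for the descent is not quite apt; that lemma concerns lifting morphisms, not descent. The relevant input is just Galois descent for finite \'etale subgroup schemes.)
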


\begin{proof}
	Using a mini-versal cycle class as in Remark~\ref{R:miniverse}, 
	one sees that $(\prod T_l
	\phi)(\A^p(X_\Omega))$ has finite index in $\prod T_l A \iso
	\pi_1^\et(A_\Omega,0)$.  Consequently,  
	it determines an \'etale
	isogeny $\widetilde A \to A_\Omega$ over $\Omega$\,; by Proposition~\ref{P:LiftReg}(4), there is a surjective regular homomorphism $\tilde \phi: \A^p(X) \to \widetilde A(\Omega)$ which lifts $\phi$ and which is initial among all regular lifts of $\phi$ along \'etale isogenies $A' \to A_\Omega$ over $\Omega$.

	Suppose now that $\phi$ is $\operatorname{Aut}(\Omega/K)$-equivariant, and briefly assume $K$ perfect. The unicity of the model over $K$ follows from the elementary Fact~\ref{F:el}. 
	Its existence follows from the universality of~$\tilde \phi$\,: for all $\sigma \in \operatorname{Aut}(\Omega/K)$, one obtains an isomorphism $g_\sigma: \widetilde A \to \widetilde{A}^\sigma$ over $\Omega$, where $\widetilde{A}^\sigma$ is the pull-back of $\widetilde{A}$ along $\sigma : \Omega \to \Omega$, making the following diagram commute
	\[\begin{tikzcd}
	&&& {\widetilde{A}(\Omega)} \\
	{\operatorname{A}^p(X_\Omega)} && {A_\Omega(\Omega)} \\
	&&& {\widetilde{A}^\sigma(\Omega)}
	\arrow["\phi", from=2-1, to=2-3]
	\arrow["{\tilde{\phi}^\sigma}"', from=2-1, to=3-4]
	\arrow["{\tilde \phi}", from=2-1, to=1-4]
	\arrow["{g_\sigma(\Omega)}", from=1-4, to=3-4]
	\arrow["\alpha", from=1-4, to=2-3]
	\arrow["{\alpha^\sigma}"', from=3-4, to=2-3]
	\end{tikzcd}\]
	Here $\tilde{\phi}^\sigma$ and $\alpha^\sigma$ are obtained from the action of $\sigma$ on $\operatorname{A}^p(X_\Omega)$ and on $A_\Omega$, and from the canonical $\sigma$-morphism $\widetilde{A}^\sigma \to \widetilde{A}$.
	To conclude, one checks as in the proof of \cite[Thm.~4.4]{ACMVdcg} that the isomorphisms $g_\sigma^{-1}$ for $\sigma \in \operatorname{Aut}(\Omega/K)$ define a Galois-descent datum
	on the isogeny $\alpha : \widetilde{A} \to A_\Omega$.

If $K$ is a non-perfect field, let $K^{\mathrm{perf}}$ be the perfect closure of $K$ inside $\Omega$.  
From what we have seen, since $\operatorname{Aut}(\Omega/K^{\mathrm{perf}})  \subseteq
\operatorname{Aut}(\Omega/K)$,  $\widetilde A$ descends to $K^{\mathrm{perf}}$. Because in fact 
$\operatorname{Aut}(\Omega/K^{\mathrm{perf}})  =
\operatorname{Aut}(\Omega/K)$, it suffices to show that $\alpha: \widetilde A
\to A_{K^{\mathrm{perf}}}$ descends to $K$. Now, by definition, the homomorphism $\alpha$ factors
through the $K^{\mathrm{perf}}/K$-image $\widetilde A\to
\operatorname{im}_{K^{\mathrm{perf}}/K}(\widetilde A)_{K^{\mathrm{perf}}}$, which exists due to~\cite[Thm.~4.3]{conradtrace}. Since $\alpha:\widetilde A \to A_{K^{\mathrm{perf}}}$ is
\'etale and $K^{\mathrm{perf}}/K$ is primary, the canonical map $\widetilde A\to
\operatorname{im}_{K^{\mathrm{perf}}/K}(\widetilde A)_{K^{\mathrm{perf}}}$, which always has
connected kernel \cite[Thm.~4.5(3)]{conradtrace}, is an isomorphism,
and $\widetilde A$ and $\alpha$ descend
canonically to $K$.
\end{proof}

We derive the following characterization of surjective regular homomorphisms that do not lift along non-trivial isogenies in terms of their kernels\,:
	
	\begin{cor} \label{C:factorization}
Let $X$ be a smooth projective variety over an algebraically closed field $\Omega$ and let $\phi: \operatorname{A}^p(X)\to A(\Omega)$ be a surjective regular homomorphism. 
Then the following statements are equivalent\,:
\begin{enumerate}
	\item $\ker \phi$ is divisible.
	
	\item $\ker \phi_{\operatorname{tors}}$ is divisible.
	
	\item $T_l \phi$ is surjective for all primes $l$.
	
	\item $\phi$ does not factor through any non-trivial \'etale  isogeny $\alpha:A'\to A$. 
\end{enumerate}
	\end{cor}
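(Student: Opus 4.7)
The plan is to establish the chain (1)$\Rightarrow$(2)$\Rightarrow$(1) together with the equivalences (1)$\Leftrightarrow$(3)$\Leftrightarrow$(4), the only non-formal step being (2)$\Rightarrow$(1). Throughout set $K := \ker\phi$ and $B := \operatorname{A}^p(X)$.

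First, I dispose of the formal implications. That (1)$\Rightarrow$(2) is immediate, since the torsion subgroup of a divisible abelian group is divisible. For (1)$\Leftrightarrow$(3), I apply the snake lemma to multiplication by $l^n$ on $0 \to K \to B \to A(\Omega) \to 0$; using that $B$ is divisible (and hence $A(\Omega)$), this produces the exact sequence $0 \to K[l^n] \to B[l^n] \to A(\Omega)[l^n] \to K/l^nK \to 0$, in which each $K/l^nK$ is finite (as a quotient of $A[l^n]$) and the transition maps $K/l^{n+1}K \to K/l^nK$ are surjective. Taking inverse limits yields $0 \to T_lK \to T_lB \to T_lA \to \widehat K_l \to 0$ with $\widehat K_l := \varprojlim_n K/l^nK$, and finiteness plus surjective transitions force $\widehat K_l = 0$ to be equivalent to $K$ being $l$-divisible; combining over all $l$ gives (1)$\Leftrightarrow$(3). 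The equivalence (3)$\Leftrightarrow$(4) then follows from Proposition~\ref{P:LiftReg} and Theorem~\ref{T:factorization}: $\phi$ factors through a non-trivial \'etale isogeny if and only if the universal lift $\widetilde A \to A$ of Theorem~\ref{T:factorization} is non-trivial, which by the characterization $\operatorname{im}(T_l\widetilde\alpha) = \operatorname{im}(T_l\phi)$ is equivalent to $\operatorname{im}(T_l\phi) \subsetneq T_lA$ for some $l$.

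The substantive step is (2)$\Rightarrow$(1), and the key intermediate claim is that $\phi$ is surjective on torsion. To prove this I take a mini-versal cycle class $\Gamma \in \operatorname{CH}^p(A\times_\Omega X)$ as in Remark~\ref{R:miniverse}, so that $\phi(\Gamma_a-\Gamma_0)=ra$ for some nonzero integer $r$. The induced homomorphism $\Gamma_* : \operatorname{A}_0(A) \to B$ composed with $\phi$ is a homomorphism $\operatorname{A}_0(A) \to A(\Omega)$, which by the universal property of the Albanese variety together with the regularity of $\phi$ factors through $\operatorname{Alb}(A)=A$; the resulting endomorphism of $A$ is precisely multiplication by $r$. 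Since the Albanese map $\operatorname{A}_0(A)_\tors \to A(\Omega)_\tors$ is an isomorphism by Roitman's theorem, and $[r]$ is surjective on $A(\Omega)_\tors$ by divisibility of $A(\Omega)$, the composition $\phi\circ\Gamma_*$ already surjects onto $A(\Omega)_\tors$ from $\operatorname{A}_0(A)_\tors$, and a fortiori $\phi_\tors : B_\tors \to A(\Omega)_\tors$ is surjective.

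Given surjectivity on torsion, the sequence $0 \to K_\tors \to B_\tors \to A(\Omega)_\tors \to 0$ is exact, and the nine-lemma produces the quotient sequence $0 \to K/K_\tors \to B/B_\tors \to A(\Omega)/A(\Omega)_\tors \to 0$. Since $B$ and $A(\Omega)$ are divisible, their quotients by torsion are $\rat$-vector spaces, and thus $K/K_\tors$ is a $\rat$-subspace, in particular divisible. Under hypothesis (2) that $K_\tors$ is divisible, $K$ is then an extension of two divisible abelian groups and is itself divisible. The main obstacle is the surjectivity of $\phi$ on torsion: this is not a formal consequence of $\phi$ being a surjection of divisible groups (the abstract map $\rat \twoheadrightarrow \rat/\integ$ is a counterexample), and it genuinely relies on the regular structure of $\phi$ via the mini-versal cycle class together with Roitman's theorem.
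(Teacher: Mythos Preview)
Your proof is correct and follows essentially the same strategy as the paper's: the equivalence (1)$\Leftrightarrow$(3) via the snake lemma is exactly the content of the paper's Lemma~\ref{L:CAlim}, and (3)$\Leftrightarrow$(4) via Theorem~\ref{T:factorization} is identical.  The two places where you deviate are both mild improvements in self-containment.  First, the paper cites \cite[Rem.~3.3]{ACMVdmij} for the surjectivity of $\phi$ on torsion, whereas you reprove it cleanly from a mini-versal cycle and Roitman's theorem.  Second, for (2)$\Rightarrow$(1) the paper re-applies Lemma~\ref{L:CAlim} to the torsion sequence $0\to K_\tors\to B_\tors\to A(\Omega)_\tors\to 0$ (using $T_l\phi_\tors=T_l\phi$) to get (2)$\Leftrightarrow$(3), while you instead pass to the torsion-free quotient and observe that $K/K_\tors$, being the kernel of a group homomorphism between $\rat$-vector spaces, is itself a $\rat$-vector space; this avoids running the Tate-module argument a second time.

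One small imprecision worth flagging: your asserted four-term exact sequence $0\to T_lK\to T_lB\to T_lA\to\widehat K_l\to 0$ need not be exact at $T_lA$ when $K$ is not already known to be $l$-divisible, since $\varprojlim^1 K[l^n]$ may intervene.  This does not damage your conclusion: the surjection $T_lA\twoheadrightarrow\widehat K_l$ always holds and factors through $\operatorname{coker}(T_l\phi)$, so $T_l\phi$ surjective still forces $\widehat K_l=0$ and hence $K$ $l$-divisible; conversely, once $K$ is $l$-divisible the sequence \emph{is} exact and gives surjectivity of $T_l\phi$.  The paper's Lemma~\ref{L:CAlim} sidesteps this by arguing the converse direction via a direct lifting of an element not in the image, rather than through $\widehat K_l$.
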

	\begin{proof}
	The argument in the proof of Theorem~\ref{T:factorization} says that (3) and (4) are equivalent (recall from Proposition~\ref{P:LiftReg} that a group-theoretic lift of a regular homomorphism along an isogeny is a regular homomorphism).
	The elementary commutative algebra  Lemma \ref{L:CAlim} below gives the equivalence of (1) and (3). Finally, since surjective regular homomorphisms are surjective on torsion (see \cite[Rem.~3.3]{ACMVdmij}), Lemma \ref{L:CAlim} below also gives that  $T_l\phi$ being surjective for all $l$ is equivalent to $\ker (\phi_{\operatorname{tors}})$ being $l$-divisible for all primes $l$, i.e., that (2) is equivalent to (3).
\end{proof}

\begin{lem} \label{L:CAlim}
	Suppose that we have a short exact sequence of abelian groups
	$$
	0\to H \to D\to G\to 0
	$$
	with $D$ an $l$-divisible group.  Then the left exact sequence 
	$$
	0\to T_l H\to T_lD\to T_lG
	$$
	is right exact if and only if $H$ is $l$-divisible.  
	
	If in addition $D_{\operatorname{tors}}\to G_{\operatorname{tors}}$ is surjective, then this is also equivalent to $H_{\operatorname{tors}}$ being $l$-divisible.
\end{lem}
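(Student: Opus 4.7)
The plan is to reduce everything to the snake lemma applied to multiplication by $l^n$ on the short exact sequence $0 \to H \to D \to G \to 0$. Since $D$ is $l$-divisible, $D/l^n D = 0$, so the six-term snake sequence truncates to a compatible system of four-term exact sequences
$$
0 \to H[l^n] \to D[l^n] \to G[l^n] \to H/l^n H \to 0,
$$
with transition maps induced by multiplication by $l$. This single observation powers both equivalences.

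For the first equivalence, in the direction where $H$ is $l$-divisible, one has $H/l^n H = 0$, so each $D[l^n] \to G[l^n]$ is pointwise surjective. Moreover, the transition map $H[l^{n+1}] \to H[l^n]$ is surjective: given $h \in H[l^n]$, write $h = l h'$ by $l$-divisibility, so $l^{n+1} h' = l^n h = 0$ and $h' \in H[l^{n+1}]$. Hence Mittag--Leffler holds for $\{H[l^n]\}$, and the inverse limit preserves exactness. For the converse, given $h \in H$, use $l$-divisibility of $D$ to construct $d_n \in D$ inductively with $l d_1 = h$ and $l d_{n+1} = d_n$, so $l^n d_n = h$. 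The images $g_n := \bar d_n$ satisfy $l^n g_n = 0$, forming a compatible system $(g_n) \in T_l G$; lifting to $(\tilde d_n) \in T_l D$ by the surjectivity hypothesis, the difference $\tilde d_1 - d_1$ lies in $H$, and $l \tilde d_1 = 0$ gives $h = -l(\tilde d_1 - d_1) \in l H$.

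For the second equivalence, the implication $H$ $l$-divisible $\Rightarrow H_{\operatorname{tors}}$ $l$-divisible in fact requires no extra hypothesis: any torsion $a \in H_{\operatorname{tors}}$ of order $m = l^k m'$ with $\gcd(l, m') = 1$ decomposes via Bezout into an $l$-primary part and a coprime-to-$l$ part; on the latter $l$ acts invertibly, while the former, written as $l b$, forces $l^{k+1} b = 0$ so that $b$ is itself torsion. The converse $H_{\operatorname{tors}}$ $l$-divisible $\Rightarrow H$ $l$-divisible is where the hypothesis on $D_{\operatorname{tors}} \to G_{\operatorname{tors}}$ enters. Given $h \in H$, use $l$-divisibility of $D$ to write $h = l d$; the image $g := \bar d \in G$ is $l$-torsion, so by hypothesis lifts to $d_t \in D_{\operatorname{tors}}$. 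Then $d - d_t \in H$ and $l d_t \in D_{\operatorname{tors}}$ maps to $l g = 0$, so $l d_t \in H_{\operatorname{tors}}$; by $l$-divisibility of $H_{\operatorname{tors}}$ one writes $l d_t = l h_1$ with $h_1 \in H_{\operatorname{tors}}$, and $h = l(d - d_t + h_1) \in l H$.

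The main difficulty is organizational rather than technical: one has to track compatibility of lifts across the levels $n$, and the cleanest way to avoid $\varprojlim^1$ obstructions is the Mittag--Leffler property for $\{H[l^n]\}$ noted above. The extra hypothesis that $D_{\operatorname{tors}} \to G_{\operatorname{tors}}$ is surjective is genuinely used in the second equivalence: without it, the element $d$ produced from the $l$-divisibility of $D$ cannot in general be adjusted by a torsion element to ensure that the residual obstruction $l d_t$ lands in $H_{\operatorname{tors}}$, which is precisely where $l$-divisibility of the torsion subgroup is to be invoked.
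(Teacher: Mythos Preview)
Your proof is correct. The forward direction of the first equivalence matches the paper's argument (snake lemma plus vanishing of the derived inverse limit, which you phrase as Mittag--Leffler). The differences lie in the other parts. For the converse of the first equivalence, the paper argues by contrapositive: if $H/l^nH \ne 0$ one picks $g_{n_0} \in G[l^{n_0}]$ outside the image of $D[l^{n_0}]$, uses $l$-divisibility of $G$ to extend it to an element of $T_lG$, and observes this element cannot lie in the image of $T_lD$. Your argument is instead direct and constructive: from an arbitrary $h \in H$ you manufacture an element of $T_lG$, lift it to $T_lD$, and read off $h \in lH$ from the level-one component. For the second equivalence, the paper's route is slicker: it simply replaces the exact sequence by $0 \to H_{\mathrm{tors}} \to D_{\mathrm{tors}} \to G_{\mathrm{tors}} \to 0$ (legitimate by the extra hypothesis), notes that $D_{\mathrm{tors}}$ inherits $l$-divisibility and that $T_lA = T_l(A_{\mathrm{tors}})$, and invokes the first part verbatim. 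Your approach instead proves the equivalence $H$ $l$-divisible $\Leftrightarrow$ $H_{\mathrm{tors}}$ $l$-divisible by hand, which is longer but has the virtue of isolating exactly where the surjectivity of $D_{\mathrm{tors}} \to G_{\mathrm{tors}}$ is used.
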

\begin{proof} 
	Since $D$ is $l$-divisible, we have for all $n>0$ exact sequences
	$$
	0\to H[l^n]\to D[l^n]\to G[l^n] \to H/l^nH \to 0.
	$$
	Using that $A/l^nA=0$ and $\varprojlim^1_n A[l^n]= 0$ for any $l$-divisible abelian group $A$, we  obtain by passing to the inverse limit a short exact sequence
	$$
	0\to T_lH \to T_lD \to T_lG\to 0.
	$$
	
	Conversely,  if $H$ is not $l$-divisible, let us assume that $H/l^nH\ne 0$ for all $n\ge n_0$.  
	In particular $D[l^n]\to G[l^n]$ is not surjective for every $n\ge n_0$.   Now let $g_{n_0}\in G[l^{n_0}]$ be an element that  is not in the image of the map $D[l^{n_0}]\to G[l^{n_0}]$.  Since $G$ is $l$-divisible (being the image of the $l$-divisible group $D$), we can lift $g_n$ to an element $(g_n)\in T_lG$.   Clearly $(g_n)$ is not the image of any element $(d_n)\in T_lD$, since then $d_{n_0}\mapsto g_{n_0}$.  Thus $T_lD\to T_lG$ is not surjective. This completes the proof of the converse.
	
	Finally assume that $D_{\operatorname{tors}}\to G_{\operatorname{tors}}$ is surjective.  Then we can simply replace the short exact sequence  $0\to H\to D\to G\to 0$ with 
	$$
	0\to H_{\operatorname{tors}}\to D_{\operatorname{tors}}\to G_{\operatorname{tors}} \to 0
	$$
	and we have reduced to the previous case, since $D$ divisible implies that $D_{\operatorname{tors}}$ is divisible, and $T_lA=T_l(A_{\operatorname{tors}})$ for any abelian group $A$.
\end{proof}

\begin{rem}\label{R:simple} Given a regular homomorphism $\phi$, using that  surjective regular homomorphisms are surjective on torsion (see \cite[Rem.~3.3]{ACMVdmij}),
	one can in fact show that $\ker(\phi)/N = \ker(\phi_{\mathrm{tors}})/N$
	for any non-zero integer $N$.
\end{rem}

\section{The Walker Abel--Jacobi map} \label{S:Walker} 
The aim of this section is to provide a new construction of the Walker Abel--Jacobi map (Theorem~\ref{T:Walker}), based on our general lifting Proposition~\ref{P:LiftReg}.

\subsection{The Bloch map and the coniveau filtration} Recall that, for any smooth projective variety $X$ over an algebraically closed field and for any prime $\ell$ invertible in $X$, Bloch~\cite{bloch} has defined a map 
$\lambda^p : \operatorname{CH}^p(X)[\ell^\infty] \to H^{2p-1}_{\mathrm{\acute et}}(X,\mathbb Q_\ell/\integ_\ell(p)).$ 
In case $X$ is a smooth projective complex variety, we obtain by comparison isomorphism a map
$\lambda^p : \operatorname{CH}^p(X)[\ell^\infty] \to H^{2p-1}_{}(X^{\mathrm{an}},\mathbb Q_\ell/\integ_\ell(p))$. 
When restricted to homologically trivial cycles, the Bloch map factors as (see, e.g., \cite[\S A.5]{ACMVbloch})
\begin{equation*}
\begin{tikzcd}
{\operatorname{CH}^p(X)_{\mathrm{hom}}[\ell^\infty]} && {H^{2p-1}(X^{\mathrm{an}},\integ(p))\otimes \rat_\ell/\integ_\ell}
\arrow["{\lambda^p}", from=1-1, to=1-3]
\arrow[from=1-3, to=1-4, hookrightarrow] &  {H^{2p-1}(X^{\mathrm{an}},\mathbb Q_\ell/\integ_\ell(p)),}
\end{tikzcd}
\end{equation*}	
where the right-hand side arrow is the canonical inclusion coming from the universal coefficient theorem.
The following lemma is due to Suzuki~\cite{Suzuki-MRL}\,:

\begin{lem}\label{L:Suzuki}
	Let $X$ be a projective complex manifold.
	Then the restriction of the Bloch map 
	$\lambda^p$ to algebraically trivial cycles factors uniquely as\,:
	\begin{equation*}
	\begin{tikzcd}
	&& {} & {\coniveau^{p-1}H^{2p-1}_{}(X^{\mathrm{an}},\integ(p))\otimes \rat_\ell/\integ_\ell} \\
	{\operatorname{A}^p(X)[\ell^\infty]} &&& {H^{2p-1}(X^{\mathrm{an}},\integ(p))\otimes \rat_\ell/\integ_\ell}
	\arrow["{\lambda^p}"', from=2-1, to=2-4]
	\arrow[from=1-4, to=2-4]
	\arrow["{\lambda_W^p}", from=2-1, to=1-4]  \arrow[from=2-4, to=2-5, hookrightarrow] &  {H^{2p-1}(X^{\mathrm{an}},\mathbb Q_\ell/\integ_\ell(p)),}
	\end{tikzcd}
	\end{equation*}	
	where the vertical arrow is induced by the inclusion $\coniveau^{p-1}H^{2p-1} (X^{\mathrm{an}},\integ(p)) \subseteq H^{2p-1} (X^{\mathrm{an}},\integ(p))$.
\end{lem}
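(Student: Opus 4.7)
The plan is to exploit two inputs: algebraically trivial cycles are parameterized by smooth projective curves via correspondences, and the action of a codimension-$p$ correspondence from a curve on cohomology factors, for purely dimensional reasons, through the coniveau filtration of level $p-1$.

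Given $\alpha \in \operatorname{A}^p(X)[\ell^\infty]$, by \cite[Ex.~10.3.2]{fulton} one finds a smooth projective curve $T$, a correspondence $\Gamma \in \operatorname{CH}^p(T \times X)$, and a class $\beta \in \operatorname{A}_0(T)$ with $\Gamma_*\beta = \alpha$; the divisibility of $\operatorname{A}_0(T)$ lets one arrange $\beta$ to be $\ell$-primary torsion. The functoriality of the Bloch map under correspondences, as developed in \cite{Suzuki-MRL}, then gives $\lambda^p(\alpha) = \Gamma_*(\lambda^1(\beta))$, with $\lambda^1(\beta)$ in $H^1(T^{\mathrm{an}}, \integ(1)) \otimes \rat_\ell/\integ_\ell$.

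The key geometric observation is that $\Gamma_* : H^1(T, \integ(1)) \to H^{2p-1}(X, \integ(p))$ has image in $\coniveau^{p-1}H^{2p-1}(X, \integ(p))$. Indeed, setting $Z := \pi_X(|\Gamma|) \subseteq X$, one has $\dim Z \leq \dim \Gamma = \dim X + 1 - p$, so $\operatorname{codim}_X Z \geq p-1$; for $\eta \in H^1(T, \integ(1))$, the formula $\Gamma_*(\eta) = (\pi_X)_*(\pi_T^*\eta \cup [\Gamma])$ shows that $\Gamma_*(\eta)$ restricts to zero on $X \setminus Z$, since $[\Gamma]$ vanishes on $T \times (X \setminus Z)$. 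Tensoring with $\rat_\ell/\integ_\ell$, one obtains that $\lambda^p(\alpha)$ lies in the image of the vertical arrow, and one defines $\lambda_W^p(\alpha)$ as the lift represented by $\Gamma_*(\lambda^1(\beta))$.

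The main obstacle is verifying that $\lambda_W^p$ is well-defined (independent of the choice of $(T,\Gamma,\beta)$) and that the factorization is unique. Well-definedness follows by the standard correspondence calculus once one notes that two such representatives differ by an algebraic equivalence on the curve side, which lifts compatibly through $\lambda^1$ by divisibility arguments. Uniqueness of the factorization amounts to injectivity of the composed map $\coniveau^{p-1}H^{2p-1}(X,\integ(p))\otimes \rat_\ell/\integ_\ell \to H^{2p-1}(X^{\mathrm{an}},\rat_\ell/\integ_\ell(p))$ on the image of $\lambda_W^p$; this is to be analyzed via the universal coefficient sequence, controlling the possible $\operatorname{Tor}$-contributions coming from the embedding $\coniveau^{p-1} \subseteq H^{2p-1}$ by the torsion in the ambient cohomology. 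This last bookkeeping point is where I expect the most care is needed, but it is an algebraic manipulation once the geometric content in the first two paragraphs is in place.
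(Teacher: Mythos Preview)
Your geometric idea---pushing torsion classes from curves through correspondences and observing that $\Gamma_*$ lands in $\coniveau^{p-1}$ for dimension reasons---is correct and is indeed how the paper itself argues in later sections (e.g.\ diagram~\eqref{E:Pfmain}).  But the paper does \emph{not} prove this lemma that way: it simply cites \cite[Lem.~2.2]{Suzuki-MRL}, which in turn rests on \cite[Thm.~5.1]{Ma}, and then invokes Fact~\ref{F:el} for uniqueness.  So your approach is genuinely different, and more self-contained in spirit; the question is whether it closes.

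There is a real gap at the step you flag as ``bookkeeping.''  The vertical map $\coniveau^{p-1}H^{2p-1}(X,\integ(p))\otimes\rat_\ell/\integ_\ell \to H^{2p-1}(X,\integ(p))\otimes\rat_\ell/\integ_\ell$ has kernel $\operatorname{Tor}_1\!\big(H^{2p-1}/\coniveau^{p-1},\,\rat_\ell/\integ_\ell\big)$, which is finite but typically nonzero.  Thus knowing that $\lambda^p(\alpha)$ lies in the image does \emph{not} pin down a unique preimage, and your proposed value $\Gamma_*(\lambda^1(\beta))$ could a priori depend on $(T,\Gamma,\beta)$.  Your sentence ``two such representatives differ by an algebraic equivalence on the curve side, which lifts compatibly through $\lambda^1$ by divisibility arguments'' does not supply the missing argument: if $(\Gamma_1)_*\beta_1=(\Gamma_2)_*\beta_2$ in $\A^p(X)$, you need $(\Gamma_1)_*\lambda^1(\beta_1)=(\Gamma_2)_*\lambda^1(\beta_2)$ in $\coniveau^{p-1}H^{2p-1}\otimes\rat_\ell/\integ_\ell$, and the difference lies in that finite kernel with no obvious reason to vanish.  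An abstract lifting argument will not save you either: for $D$ divisible, $K$ finite, and $G'$ divisible, one has $\operatorname{Ext}^1(D,K)\neq 0$ in general, so a section of the pullback extension need not exist.  This is exactly why Suzuki's argument goes through the Zariski sheaf $\mathcal H^p(\integ(p))$ and the coniveau spectral sequence rather than through curves.

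On uniqueness you have made it harder than necessary.  Once a homomorphism lift exists, it is unique by Fact~\ref{F:el}: the source $\A^p(X)[\ell^\infty]$ is divisible and the vertical arrow has finite kernel.  There is no need to analyse injectivity ``on the image of $\lambda_W^p$,'' which is circular anyway.
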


\begin{proof} The factorization in the bottom row 
	was given above.
	The rest is obtained  in the proof of \cite[Lem.~2.2]{Suzuki-MRL} as a consequence of \cite[Thm.~5.1]{Ma}. The unicity of the factorization follows from the elementary Fact~\ref{F:el}, together with the divisibility of~$\operatorname{A}^p(X)$ (e.g., \cite[Lem.~7.10]{BlochOgus}) and the finiteness of torsion in $H^{2p-1}(X^{\mathrm{an}},\integ(p))$.
\end{proof}

\subsection{The Abel--Jacobi map on torsion and the Bloch map}\label{SS:AJ-Bloch}
Let $X$ be a projective complex manifold.
We have the  canonical identification
\begin{align}
J^{2p-1}(X)[\ell^\infty]&=  H^{2p-1}(X^{\mathrm{an}},\mathbb Z(p)) \otimes\mathbb Q_\ell/\mathbb Z_\ell  \label{E:IJtors}
\end{align}
which comes from the classical identification $J(H)[N] = H_1(J(H),  \integ/N\integ) = H_\tau \otimes \integ/N\integ$ for a pure integral Hodge structure $H$ of weight $-1$, and the elementary fact that the torsion-free quotient map $H \twoheadrightarrow H_\tau$ becomes an isomorphism after tensoring with a divisible group.
After making the identification \eqref{E:IJtors}, the Bloch map coincides with the Abel--Jacobi map on torsion. Precisely\,:
\begin{pro}[Bloch \cite{bloch}] \label{P:AJ-Bloch}
On homologically trivial cycles of $\ell$-primary torsion, the Bloch map   coincides with the Abel--Jacobi map, i.e.,  the following diagram commutes\,:
	\begin{equation*}\label{E:AJ-Bloch}
	\begin{tikzcd}
	& & J^{2p-1}(X)[\ell^\infty] \\
	{\operatorname{CH}^p(X)_{\mathrm{hom}}[\ell^\infty]} && {H^{2p-1}(X^{\mathrm{an}},\integ(p))\otimes \rat_\ell/\integ_\ell}
	\arrow["{\lambda^p}"', from=2-1, to=2-3]
	\arrow["\eqref{E:IJtors}", from=1-3, to=2-3, equal]
	\arrow["{\mathrm{AJ}[\ell^\infty]}", from=2-1, to=1-3].
	\end{tikzcd}
	\end{equation*}	
\end{pro}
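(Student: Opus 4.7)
The plan is to verify the agreement of the two maps at each finite level $\ell^n$ and pass to the colimit, so that one can reduce the proof to a classical computation of Bloch.

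\textbf{Step 1: Identifying $\ell^n$-torsion in $J^{2p-1}(X)$.} For a pure integral weight-$(-1)$ Hodge structure $H$, the vanishing $F^0 H_\cx \cap H_{\mathbb R}=0$ forces the natural map $H_\tau \otimes \mathbb Z/N \to J(H)[N]$, sending $[h] \mapsto [h/N]$, to be an isomorphism; indeed, its inverse is the connecting homomorphism in the exponential sequence $0 \to H_\tau \to H_{\cx}/F^0 \to J(H) \to 0$ upon taking $N$-torsion. Taking the colimit with $N=\ell^n$ yields~\eqref{E:IJtors} and realizes $J^{2p-1}(X)[\ell^\infty]$ as the image of the universal coefficient inclusion $H^{2p-1}(X^{\mathrm{an}},\integ(p)) \otimes \rat_\ell/\integ_\ell \hookrightarrow H^{2p-1}(X^{\mathrm{an}},\rat_\ell/\integ_\ell(p))$.

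\textbf{Step 2: The Abel--Jacobi map at finite level.} Given $z \in \operatorname{CH}^p(X)_{\mathrm{hom}}[\ell^n]$, pick an algebraic representative $\tilde z$ and an integral $(2p-1)$-chain $\Gamma$ with $\partial \Gamma = \tilde z$, which exists by homological triviality. Since $\ell^n z = 0$ in $\operatorname{CH}^p(X)$, a rational equivalence between $\ell^n \tilde z$ and $0$ provides an integral chain $\Sigma$ with $\partial \Sigma = \ell^n \tilde z$. Then $\ell^n \Gamma - \Sigma$ is an integral cocycle, and under Step~1 the class $\operatorname{AJ}(z) \in J^{2p-1}(X)[\ell^n]$ corresponds to $[\ell^n\Gamma - \Sigma] \bmod \ell^n$ in $H^{2p-1}(X^{\mathrm{an}},\integ(p)) \otimes \integ/\ell^n$.

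\textbf{Step 3: The Bloch map at finite level.} Bloch's construction of $\lambda^p$ on $\ell^n$-torsion proceeds \emph{via} the connecting homomorphism of the long exact sequence associated with $0 \to \integ(p) \stackrel{\ell^n}{\to} \integ(p) \to \integ/\ell^n(p) \to 0$, applied after lifting $z$ to cohomology with supports; the rational equivalence $\Sigma$ from Step~2 provides the required trivialization, and an unwinding of the definitions exhibits $\lambda^p(z)$ in $H^{2p-1}(X^{\mathrm{an}},\integ/\ell^n(p))$ as the reduction of $[\ell^n \Gamma - \Sigma]$.

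\textbf{Step 4: Comparison and colimit.} The two constructions in Steps~2 and~3 share the same chain-level input $(\tilde z, \Gamma, \Sigma)$, so they produce the same element in $H^{2p-1}(X^{\mathrm{an}},\integ(p)) \otimes \integ/\ell^n \hookrightarrow H^{2p-1}(X^{\mathrm{an}},\integ/\ell^n(p))$. Taking the colimit over $n$ yields the desired commutative diagram. The main obstacle is the bookkeeping in Step~3, which requires careful tracking of sign conventions and of the identification of Bloch's boundary construction with the explicit chain-level formula; this is exactly the content of Bloch's original argument in \cite{bloch}, and we refer to \cite[\S A.5]{ACMVbloch} for a detailed exposition. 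Uniqueness of the lifted map is ensured by Fact~\ref{F:el} once the factorization is established on a divisible group.
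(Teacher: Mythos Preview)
Your proposal and the paper's proof take essentially the same approach: both defer the actual verification to Bloch's original argument \cite[Prop.~3.7]{bloch}. The paper's proof is a bare citation (to Bloch and to \cite[\S A.2.1]{ACMVbloch}), whereas you supply a sketch of the chain-level computation before ultimately invoking the same references; your Step~4 explicitly acknowledges that the delicate bookkeeping is ``exactly the content of Bloch's original argument.'' Two minor remarks: the paper cites \cite[\S A.2.1]{ACMVbloch} rather than \S A.5 for this comparison (\S A.5 concerns the factorization through the universal-coefficient inclusion, discussed just before the proposition), and your closing sentence invoking Fact~\ref{F:el} is superfluous here since the proposition asserts commutativity rather than uniqueness of a lift.
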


\begin{proof}
	This is due to Bloch~\cite[Prop.~3.7]{bloch} (see also \cite[\S A.2.1]{ACMVbloch}).
\end{proof}

\subsection{Proof of Theorem \ref{T:Walker}} \label{SS:Walker}
Let $X$ be a projective complex manifold.
As above in \S \ref{SS:AJ-Bloch}, we have a canonical identification
\begin{align}
J^{2p-1}_{W}(X)[\ell^\infty]&= \coniveau ^{p-1}H^{2p-1}(X^{\mathrm{an}},\mathbb Z(p)) \otimes\mathbb Q_\ell/\mathbb Z_\ell.  \label{E:JWtors}
\end{align}
We are trying to construct a lift
\begin{equation*}
\xymatrix{
	& &J^{2p-1}_W(X)(\mathbb C) \ar[d]^\alpha\\
	\operatorname{A}^p(X)\ar[rr]_<>(0.5){\psi^p} \ar@{-->}[rru]^{\psi^p_W}&& J^{2p-1}_a(X)(\mathbb C) \ar@{^(->}[r]& J^{2p-1}(X)(\mathbb C).
}
\end{equation*} 
From Proposition  \ref{P:LiftReg} it suffices to construct for all primes $\ell$  a lift 
\begin{equation*}
\xymatrix{
	&& J^{2p-1}_W(X)[\ell^\infty] \ar[d]^{\alpha[\ell^\infty]}\\
	\operatorname{A}^p(X)[\ell^\infty]\ar[rr]_<>(0.5){\psi^p[\ell^\infty]} \ar@{-->}[rru]^{\psi^p_{\infty}}& &J^{2p-1}_a(X)[\ell^\infty] \ar@{^(->}[r]& J^{2p-1}(X)[\ell^\infty]. 
}
\end{equation*}
Using the identifications  \eqref{E:IJtors} and \eqref{E:JWtors}, we have a commutative diagram
\begin{equation}\label{E:BlochWalker}
\xymatrix@C=1em{
	&& J^{2p-1}_W(X)[\ell^\infty] \ar[d]^{\alpha[\ell^\infty]} \ar@{=}[rr]& &\coniveau^{p-1}H^{2p-1}_{}(X^{\mathrm{an}},\integ(p)) \otimes \rat_\ell/\integ_\ell \ar[d]\\
	\operatorname{A}^p(X)[\ell^\infty]\ar[rr]^<>(0.5){\psi^p[\ell^\infty]}  && J^{2p-1}_a(X)[\ell^\infty] \ar@{^(->}[r]& J^{2p-1}(X)[\ell^\infty] \ar@{=}[r]&    H^{2p-1}(X^{\mathrm{an}},\integ(p)) \otimes \mathbb Q_\ell/\integ_\ell  
}
\end{equation}
where, by Proposition~\ref{P:AJ-Bloch}, the composition of the bottom row is the Bloch map, and the right vertical arrow is induced by the inclusion $\coniveau^{p-1}H^{2p-1} (X^{\mathrm{an}},\integ(p)) \subseteq H^{2p-1} (X^{\mathrm{an}},\integ(p))$. The desired lift on $\ell$-power torsion is then an immediate consequence of Lemma \ref{L:Suzuki}, completing the proof of the theorem. \qed

\subsection{The Walker Abel--Jacobi map does not lift further along isogenies}
The following result was communicated to us by Fumiaki Suzuki.
\begin{teo}[Suzuki]\label{T:Walker-initial}
	Suppose $X$ is a projective complex manifold. Then the kernel of the Walker Abel--Jacobi map $\psi_W^p : \operatorname{A}^p(X) \to J^{2p-1}_W(X)$ is divisible. Consequently, 
	the Walker Abel--Jacobi map $\psi_W^p$ is initial among all lifts of the Abel--Jacobi map $\psi^p : \operatorname{A}^p(X) \to J_a^{2p-1}(X)$ along isogenies\,; in particular, if $\psi^p_W: \operatorname{A}^p(X) \to J^{2p-1}_W(X)$ factors through an isogeny $f : A \to J^{2p-1}_W(X)$, then $f$ is an isomorphism.
\end{teo}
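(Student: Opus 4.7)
The strategy is to reduce all three assertions to the single claim that $T_\ell \psi_W^p$ is surjective for every prime $\ell$, and then to verify that surjectivity by identifying $\psi_W^p$ on $\ell$-primary torsion with Suzuki's lifted Bloch map.

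Since we are in characteristic zero, every isogeny is \'etale, so Corollary~\ref{C:factorization} applied to the surjective regular homomorphism $\psi_W^p$ gives the equivalence of\,: (i) $\ker \psi_W^p$ is divisible\,; (ii) $T_\ell \psi_W^p$ is surjective for every prime $\ell$\,; and (iii) $\psi_W^p$ does not factor through any non-trivial isogeny $f : A \to J_W^{2p-1}(X)$. The equivalence (i)$\Leftrightarrow$(iii) is precisely the ``in particular'' clause of the theorem. For the initiality assertion, given any other lift $\phi : \operatorname{A}^p(X) \to B(\cx)$ of $\psi^p$ along an isogeny $\beta : B \to J_a^{2p-1}(X)$, I would apply Theorem~\ref{T:factorization} to produce the universal \'etale lift $\widetilde{\psi^p} : \operatorname{A}^p(X) \to \widetilde{J_a}(\cx)$ of $\psi^p$, which canonically dominates both $(\alpha, \psi_W^p)$ and $(\beta, \phi)$\,; under (ii), the comparison isogeny $\widetilde{J_a} \to J_W^{2p-1}(X)$ induces an equality of Tate modules on each side and is hence an isomorphism, so that composing its inverse with the canonical morphism $\widetilde{J_a} \to B$ furnishes the required morphism $J_W^{2p-1}(X) \to B$ of lifts.

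Next, I would identify $\psi_W^p$ on $\ell$-primary torsion with Suzuki's lifted Bloch map $\lambda_W^p$ from Lemma~\ref{L:Suzuki}. By Proposition~\ref{P:AJ-Bloch} together with the identification \eqref{E:IJtors}, the restriction of $\psi^p$ to $\operatorname{A}^p(X)[\ell^\infty]$ coincides with $\lambda^p$\,; combining this with Suzuki's factorization in Lemma~\ref{L:Suzuki} and the uniqueness of lifts guaranteed by Fact~\ref{F:el} (using the divisibility of $\operatorname{A}^p(X)[\ell^\infty]$ and the finiteness of $\ker \alpha[\ell^\infty]$), the map $\psi_W^p[\ell^\infty]$ is identified, via \eqref{E:JWtors}, with $\lambda_W^p$. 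Passing to Tate modules, the surjectivity of $T_\ell \psi_W^p$ is therefore equivalent to the surjectivity of
\[
T_\ell \lambda_W^p : T_\ell \operatorname{A}^p(X) \longrightarrow \coniveau^{p-1} H^{2p-1}(X^{\mathrm{an}}, \integ(p)) \otimes \integ_\ell.
\]

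The main obstacle is establishing this last surjectivity\,; it is the integral upgrade of the rational statement $\operatorname{im}(T_\ell \lambda^p|_{\operatorname{A}^p(X)}) \otimes \rat_\ell = \coniveau^{p-1} H^{2p-1}(X^{\mathrm{an}}, \rat_\ell(p))$ of \cite[Prop.~5.2]{suwa}. My plan is to follow Suzuki's Hodge-theoretic / Bloch--Ogus construction of the Walker Abel--Jacobi map, which produces $\lambda_W^p$ at each finite level $\ell^n$ as a surjection of $\operatorname{A}^p(X)[\ell^n]$ onto the $\ell^n$-torsion of $\coniveau^{p-1} H^{2p-1}(X^{\mathrm{an}}, \integ(p)) \otimes \rat_\ell/\integ_\ell$\,; the surjectivity of $T_\ell \lambda_W^p$ would then follow by passage to the inverse limit, using that $\operatorname{A}^p(X)[\ell^\infty]$ is divisible and that the relevant kernel systems satisfy Mittag--Leffler. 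This integral refinement of \cite[Prop.~5.2]{suwa}, carried out via Suzuki's Bloch--Ogus construction, is the technical heart of the argument, and it closes the proof.
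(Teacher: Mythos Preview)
Your reduction is correct and matches the paper: use Corollary~\ref{C:factorization} (together with Theorem~\ref{T:factorization} for the initiality statement) to reduce everything to a single condition on $\psi_W^p$ restricted to $\ell$-primary torsion, and then invoke an input from Suzuki. The identification of $\psi_W^p[\ell^\infty]$ with $\lambda_W^p$ via Fact~\ref{F:el} is also what the paper does (compare diagram~\eqref{E:BlochWalker} and Lemma~\ref{L:BlochWalker}).

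The gap is in your last paragraph, where you have not actually pinned down the needed input. You assert that Suzuki's construction ``produces $\lambda_W^p$ at each finite level $\ell^n$ as a surjection'', but a snake-lemma computation on $0 \to \ker\psi_W^p \to \operatorname{A}^p(X) \to J_W^{2p-1}(X) \to 0$ (using divisibility of $\operatorname{A}^p(X)$) shows that surjectivity of $\psi_W^p$ on $\ell^n$-torsion for every $n$ is \emph{already equivalent} to $\ker\psi_W^p$ being $\ell$-divisible. So this assertion \emph{is} the theorem, and your subsequent Mittag--Leffler step adds nothing. Note in particular that surjectivity on all of $\ell$-primary torsion --- which does hold for any surjective regular homomorphism --- does not by itself give surjectivity on each finite level $\ell^n$.

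The paper avoids finite levels entirely and names the precise input: the short exact sequence of \cite[Lem.~2.2]{Suzuki-MRL} exhibits $\ker(\psi_W^p[\ell^\infty])$ as a quotient of $K \otimes \rat_\ell/\integ_\ell$, where $K = \ker\big(H^{p-1}(X,\mathcal{H}^p(\integ(p))) \twoheadrightarrow \coniveau^{p-1}H^{2p-1}(X,\integ(p))\big)$ is the kernel of the Bloch--Ogus edge map. Since $A \otimes \rat_\ell/\integ_\ell$ is divisible for any abelian group $A$, so is any quotient of it, and Corollary~\ref{C:factorization}(2) finishes. That single citation plus this elementary observation is the entire ``technical heart'' you correctly located but did not resolve.
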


\begin{proof} 
By Theorem~\ref{T:factorization} and Corollary~\ref{C:factorization},
 it is equivalent to show that the kernel of the restriction of $\psi^p_W$ to $\ell$-primary torsion is divisible for all primes $\ell$.
 By the short exact sequence of \cite[Lem.~2.2]{Suzuki-MRL}, $\ker (\psi^p_W[\ell^\infty])$ is a quotient of $K\otimes \rat_\ell/\integ_\ell$, where $K$ is the kernel of the surjection $$f^p : H^{p-1}(X,\mathcal{H}^p(\integ(p))) \to \coniveau^{p-1}H^{2p-1}(X,\integ(p)).$$
The divisibility of $\ker (\psi^p_W[\ell^\infty])$ then follows from the divisibility of $K\otimes \rat_\ell/\integ_\ell$. (For any abelian group $A$ we have $A\otimes \mathbb Q_\ell/\mathbb Z_\ell$ is divisible.)
\end{proof}

\begin{rem}
For a complex projective  manifold $X$, the kernel of the Abel--Jacobi map $\psi^p : \operatorname{A}^p(X) \to J_a^{2p-1}(X)$ is not divisible in general for $p>2$, as shown by the construction of \cite[Cor.~4.2]{Ottem-Suzuki}.
\end{rem}

\subsection{First proof of Theorem~\ref{T:main}} \label{SS:firstproof}
Recall from~\cite[Thm.~A]{ACMVdmij} that, given  a smooth projective variety $X$ over a subfield $K$ of $\cx$, the algebraic intermediate Jacobian $J_a^{2p-1}(X_\cx)$ admits a unique model over $K$ such that the Abel--Jacobi map $\psi^p : \operatorname{A}^p(X_{\cx}) \to J_a^{2p-1}(X_\cx)$
is $\operatorname{Aut}(\cx/K)$-equivariant. 
By Theorem~\ref{T:Walker-initial}, the Walker Abel--Jacobi map $\psi_W^p : \operatorname{A}^p(X_\cx) \to J^{2p-1}_W(X_\cx)$ is universal among lifts of the Abel--Jacobi map along isogenies. 
We can conclude from Theorem~\ref{T:factorization} that the Walker intermediate Jacobian $J^{2p-1}_W(X_\cx)$ admits a unique model over $K$ such that the Walker Abel--Jacobi map 
$\psi_W^p : \operatorname{A}^p(X_{\cx}) \to J_W^{2p-1}(X_\cx)$
is $\operatorname{Aut}(\cx/K)$-equivariant.
\qed

\section{Descending the Walker Abel--Jacobi map}

In this section we provide a second proof of Theorem~\ref{T:main}. It is based on a factorization of the Bloch map restricted to algebraically trivial cycles. This approach will prove crucial for our applications (Corollaries~\ref{C:model} and~\ref{C:imageBloch}).
For that purpose,
 we start by recasting the results of \S \ref{S:Walker} in the $\ell$-adic setting.

\subsection{The Bloch map and the coniveau filtration, $\ell$-adically}

For lack of a suitable reference, we start with a comparison between the analytic and $\ell$-adic coniveau filtrations\,:

\begin{lem}\label{L:ConiGAGAbody}
	Let $X$ be a smooth projective variety over a field $K\subseteq \mathbb C$.  
	We have canonical identifications
	\begin{equation}\label{E:identificationBody}
	{\coniveau^iH_{\operatorname{\acute{e}t}}^j(X_{\mathbb C},\integ_\ell)} =  {\coniveau^iH^j(X_{\mathbb C}^{\mathrm{an}},\integ)\otimes \integ_\ell}.
	\end{equation}
Moreover, the natural action of $\operatorname{Aut}(\mathbb C/K)$-action on
 $H_{\operatorname{\acute{e}t}}^j(X_{\mathbb C},\integ_\ell)$ induces an action on 
	${\coniveau^iH_{\operatorname{\acute{e}t}}^j(X_{\mathbb C},\integ_\ell)} $.
\end{lem}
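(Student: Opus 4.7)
The plan is to reduce the claimed identification \eqref{E:identificationBody} to Artin's comparison theorem together with the flatness of $\integ_\ell$ over $\integ$. First, for each closed subvariety $Z \subseteq X_{\cx}$ of codimension $\geq i$, the open complement $U_Z := X_{\cx} \setminus Z$ is a smooth quasi-projective complex variety, so $U_Z^{\mathrm{an}}$ has the homotopy type of a finite CW complex, and consequently $H^j(U_Z^{\mathrm{an}}, \integ)$ is finitely generated. Artin's comparison isomorphism $H^j_{\operatorname{\acute{e}t}}(-, \integ/\ell^n) \simeq H^j(-^{\mathrm{an}}, \integ/\ell^n)$, combined with this finite generation, yields canonical identifications
\[
H^j_{\operatorname{\acute{e}t}}(V, \integ_\ell) \simeq H^j(V^{\mathrm{an}}, \integ) \otimes \integ_\ell \qquad \text{for } V \in \{X_{\cx}, U_Z\},
\]
compatible with the restriction map induced by the open immersion $U_Z \hookrightarrow X_{\cx}$.

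Second, since $\integ_\ell$ is flat over $\integ$, tensoring by $\integ_\ell$ commutes with the formation of kernels and of arbitrary sums of subgroups. Hence
\begin{align*}
\coniveau^i H^j_{\operatorname{\acute{e}t}}(X_{\cx}, \integ_\ell)
&= \sum\nolimits_Z \ker\bigl(H^j_{\operatorname{\acute{e}t}}(X_{\cx}, \integ_\ell) \to H^j_{\operatorname{\acute{e}t}}(U_Z, \integ_\ell)\bigr) \\
&= \sum\nolimits_Z \ker\bigl(H^j(X_{\cx}^{\mathrm{an}}, \integ) \to H^j(U_Z^{\mathrm{an}}, \integ)\bigr) \otimes \integ_\ell \\
&= \coniveau^i H^j(X_{\cx}^{\mathrm{an}}, \integ) \otimes \integ_\ell,
\end{align*}
the sums running over all closed subvarieties $Z \subseteq X_{\cx}$ of codimension $\geq i$; this gives \eqref{E:identificationBody}.

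Third, for the $\operatorname{Aut}(\cx/K)$-equivariance, any $\sigma \in \operatorname{Aut}(\cx/K)$ acts on $X_{\cx} = X \times_K \cx$ through the second factor, inducing an automorphism of $X_{\cx}$ as a $K$-scheme that maps closed subvarieties of codimension $\geq i$ to closed subvarieties of the same codimension. By functoriality of étale cohomology, the pullback $\sigma^*$ on $H^j_{\operatorname{\acute{e}t}}(X_{\cx}, \integ_\ell)$ sends $\ker\bigl(H^j_{\operatorname{\acute{e}t}}(X_{\cx}, \integ_\ell) \to H^j_{\operatorname{\acute{e}t}}(X_{\cx} \setminus \sigma(Z), \integ_\ell)\bigr)$ onto $\ker\bigl(H^j_{\operatorname{\acute{e}t}}(X_{\cx}, \integ_\ell) \to H^j_{\operatorname{\acute{e}t}}(X_{\cx} \setminus Z, \integ_\ell)\bigr)$, and summing over $Z$ shows that $\sigma^*$ preserves $\coniveau^i H^j_{\operatorname{\acute{e}t}}(X_{\cx}, \integ_\ell)$. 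The only genuine subtlety I anticipate lies in the first step, namely in passing from Artin's comparison with torsion coefficients to the $\integ_\ell$-coefficient statement on the \emph{non-proper} opens $U_Z$; once the finite generation of $H^\bullet(U_Z^{\mathrm{an}},\integ)$ is invoked, the rest is a formal consequence of flatness and naturality.
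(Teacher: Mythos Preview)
Your proof is correct and follows essentially the same strategy as the paper's: both arguments reduce the identification \eqref{E:identificationBody} to Artin's comparison theorem together with the flatness of $\integ_\ell$ over $\integ$. The paper packages this via a three-row diagram with colimits $\varinjlim_Z H^j(X_\cx\setminus Z)$, while you work directly with the sum-of-kernels description of coniveau; since the indexing system of closed subvarieties is filtered, these formulations are equivalent and the underlying content is the same.

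The one place your argument genuinely diverges is the $\aut(\cx/K)$-stability of the coniveau filtration. The paper argues that the coniveau filtration on $X_\cx$ can already be computed using closed subvarieties defined over $K$ (via spreading out, smooth base change, and taking Galois orbits), which is a stronger statement than what the lemma asserts. Your approach---observing that each $\sigma\in\aut(\cx/K)$ is a $K$-automorphism of $X_\cx$ permuting the set of codimension-$\geq i$ closed subvarieties, hence permuting the individual kernels in the sum---is more direct and entirely sufficient for the claim as stated. The paper's stronger input is not actually needed elsewhere in the argument, so your simplification costs nothing.
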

\begin{proof} 
	We have the following commutative diagram\,:
	\[\begin{tikzcd}
	{0} & {\coniveau^iH_{\operatorname{\acute{e}t}}^j(X_{\mathbb C},\integ_\ell)} & {H_{\operatorname{\acute{e}t}}^j(X_{\mathbb C},\integ_\ell)} & {\varinjlim H_{\operatorname{\acute{e}t}}^j(X_{\mathbb C}\setminus Z,\integ_\ell)} \\
	{0} & {\coniveau^iH^j(X_{\mathbb C}^{\mathrm{an}},\integ_\ell)} & {H^j(X_{\mathbb C}^{\mathrm{an}},\integ_\ell)} & {\varinjlim H^j((X_{\mathbb C}\setminus Z)^{\mathrm{an}},\integ_\ell)} \\
	{0} & {\coniveau^iH^j(X_{\mathbb C}^{\mathrm{an}},\integ)\otimes \integ_\ell} & {H^j(X_{\mathbb C}^{\mathrm{an}},\integ)\otimes \integ_\ell} & {\varinjlim H^j((X_{\mathbb C}\setminus Z)^{\mathrm{an}},\integ)\otimes \integ_\ell}
	\arrow[from=1-1, to=1-2]
	\arrow[from=2-1, to=2-2]
	\arrow[from=3-1, to=3-2]
	\arrow[from=1-2, to=1-3]
	\arrow[from=2-2, to=2-3]
	\arrow[from=3-2, to=3-3]
	\arrow[from=1-3, to=1-4]
	\arrow[from=2-3, to=2-4]
	\arrow[from=3-3, to=3-4]
	\arrow["{\simeq}", from=3-3, to=2-3]
	\arrow["{\simeq}", from=3-4, to=2-4]
	\arrow["{\simeq}", from=2-3, to=1-3]
	\arrow["{\simeq}", from=2-4, to=1-4]
	\end{tikzcd}\]
	Here the limits are taken over all closed subschemes $Z$ of $X_\cx$ of codimension $\leq i$.
	The top two rows are exact by definition of the coniveau filtration, while the third is also exact by flatness of the $\integ$-module $\integ_\ell$. The bottom vertical arrows are isomorphisms by flatness of $\integ_\ell$ and the fact that $\varinjlim$ commutes with $\otimes$. 
	The top two vertical arrows are the isomorphisms provided by Artin's comparison theorem. 
	Thus we obtain the desired identification.
	
	The action of $\operatorname{Aut}(\mathbb C/K)$ on $
	{\coniveau^iH_{\operatorname{\acute{e}t}}^j(X_{\mathbb C},\integ_\ell)} $ comes from the fact that the coniveau filtration on~$X_{\mathbb C}$ can be obtained using subvarieties defined over $K$ (as can be seen by spreading out and by using smooth base-change, followed by taking Galois-orbits).  
\end{proof}

As an immediate consequence of Lemma~\ref{L:ConiGAGAbody}, we obtain\,:

\begin{lem}\label{L:suzuki2}
	Let $X$ be a smooth projective variety over a field $K\subseteq \mathbb C$.
	Then the restriction of the $\operatorname{Aut}(\mathbb C/K)$-equivariant  Bloch map $\lambda^p : \operatorname{CH}^p(X_{\mathbb C})[\ell^\infty] \to H^{2p-1}_{\operatorname{\acute{e}t}}(X_{\mathbb C},\mathbb Q_\ell/\integ_\ell(p))$ to algebraically trivial cycles factors uniquely  into the following commutative diagram of $\operatorname{Aut}(\mathbb C/K)$-modules\,:
	$$
	\xymatrix{
		&&{\coniveau^{p-1}H^{2p-1}_{\operatorname{\acute{e}t}}(X_{\mathbb C},\integ_\ell(p))\otimes_{\mathbb Z_\ell} \rat_\ell/\integ_\ell}\ar[d]&\\
		{\operatorname{A}^p(X_{\mathbb C})[\ell^\infty]} \ar[rr]_<>(0.5){\lambda^p} \ar[rru]^<>(0.5){\lambda^p_W\quad }&& {H^{2p-1}_{\operatorname{\acute{e}t}}(X_{\mathbb C},\integ_\ell(p))\otimes_{\mathbb Z_\ell} \rat_\ell/\integ_\ell}
		\ar[r] & H^{2p-1}_{\operatorname{\acute{e}t}}(X_{\mathbb C},\mathbb Q_\ell/\integ_\ell(p)),
	}
	$$
	where the vertical arrow is induced by the inclusion $\coniveau^{p-1}H^{2p-1}_{\operatorname{\acute{e}t}}(X_{\mathbb C},\integ_\ell(p)) \subseteq H^{2p-1}_{\operatorname{\acute{e}t}}(X_{\mathbb C},\integ_\ell(p))$.  
\end{lem}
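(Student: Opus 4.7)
The plan is to deduce this $\ell$-adic, $\operatorname{Aut}(\cx/K)$-equivariant statement from the analytic factorization of Lemma~\ref{L:Suzuki} via the Artin-type identifications of Lemma~\ref{L:ConiGAGAbody}, with uniqueness and equivariance then delivered by the elementary Fact~\ref{F:el}.

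For existence, I would apply Lemma~\ref{L:Suzuki} to the complex manifold $X_\cx^{\mathrm{an}}$ to obtain a factorization of the analytic Bloch map through $\coniveau^{p-1}H^{2p-1}(X_\cx^{\mathrm{an}},\integ(p)) \otimes \rat_\ell/\integ_\ell$. Writing this target as
$$\coniveau^{p-1}H^{2p-1}(X_\cx^{\mathrm{an}},\integ(p)) \otimes \integ_\ell \otimes_{\integ_\ell} \rat_\ell/\integ_\ell$$
and invoking the Artin comparison identification \eqref{E:identificationBody} of Lemma~\ref{L:ConiGAGAbody} translates the result into a factorization of the $\ell$-adic Bloch map through $\coniveau^{p-1}H^{2p-1}_{\operatorname{\acute{e}t}}(X_\cx,\integ_\ell(p)) \otimes_{\integ_\ell} \rat_\ell/\integ_\ell$. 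Doing this correctly hinges on the standard compatibility between the analytic and \'etale Bloch maps under Artin's comparison, as recalled in \cite[\S A.3]{ACMVbloch}.

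For uniqueness, the group $\operatorname{A}^p(X_\cx)[\ell^\infty]$ is divisible since $\operatorname{A}^p(X_\cx)$ is (e.g., \cite[Lem.~7.10]{BlochOgus}). The vertical arrow of the diagram is induced by the inclusion of finitely generated $\integ_\ell$-modules $\coniveau^{p-1}H^{2p-1}_{\operatorname{\acute{e}t}}(X_\cx,\integ_\ell(p)) \hookrightarrow H^{2p-1}_{\operatorname{\acute{e}t}}(X_\cx,\integ_\ell(p))$, and the long exact $\operatorname{Tor}$ sequence obtained by tensoring the corresponding short exact sequence with $\rat_\ell/\integ_\ell$ identifies the kernel of this arrow with a subquotient of the torsion of the cokernel, which is finite. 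Fact~\ref{F:el} then forces uniqueness, and, combined with the $\operatorname{Aut}(\cx/K)$-equivariance of the $\ell$-adic Bloch map, of the coniveau filtration (Lemma~\ref{L:ConiGAGAbody}), and of the vertical arrow, it immediately yields $\operatorname{Aut}(\cx/K)$-equivariance of the lift $\lambda^p_W$. The only real check required, and thus the main obstacle, lies in correctly matching the analytic and \'etale Bloch maps in the translation of Lemma~\ref{L:Suzuki} to the \'etale setting\,; everything else is formal.
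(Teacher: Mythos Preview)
Your proposal is correct and follows essentially the same route as the paper: existence via Lemma~\ref{L:Suzuki} combined with the identification~\eqref{E:identificationBody} from Lemma~\ref{L:ConiGAGAbody}, and uniqueness plus $\operatorname{Aut}(\cx/K)$-equivariance via Fact~\ref{F:el}. The paper is slightly terser---it just notes that $H^{2p-1}_{\operatorname{\acute{e}t}}(X_\cx,\integ_\ell(p))$ has finite rank and finite torsion to get a finite kernel for the vertical arrow, rather than spelling out the $\operatorname{Tor}$ sequence---but the argument is the same.
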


\begin{proof}
	The factorization as groups follows directly from Lemma~\ref{L:Suzuki} together with the identification~\eqref{E:identificationBody}.  
	Now,  since $H^{2p-1}_{\operatorname{\acute{e}t}}(X_{\mathbb C},\integ_\ell(p))$ has finite rank and finite torsion, the elementary Fact~\ref{F:el} shows that the lift $\lambda^p_W$ is uniquely determined by $\lambda^p$. In addition, still by Fact~\ref{F:el}, since both $\lambda^p$ and the inclusion $\coniveau^{p-1}H^{2p-1}_{\operatorname{\acute{e}t}}(X_{\mathbb C},\integ_\ell(p)) \subseteq H^{2p-1}_{\operatorname{\acute{e}t}}(X_{\mathbb C},\integ_\ell(p))$ are $\operatorname{Aut}(\cx/K)$-equivariant, then so is $\lambda^p_W$.
\end{proof}

\subsection{The Walker Abel--Jacobi map on torsion and the Bloch map, $\ell$-adically}
From the identification \eqref{E:JWtors} and Lemma~\ref{L:ConiGAGAbody}, we obtain the following canonical identification of abelian groups\,: 
\begin{align}
J^{2p-1}_{W}(X_{\mathbb C})[\ell^\infty]&
= \coniveau^{p-1}H^{2p-1}_{\operatorname{\acute{e}t}}(X_{\mathbb C},\mathbb Z_\ell(p)) \otimes_{\mathbb Z_\ell}\mathbb Q_\ell/\mathbb Z_\ell, \label{E:JWtorsEt}
\end{align}
which is the $\ell$-adic analogue of the identification \eqref{E:JWtors}.
In addition, by the comparison isomorphism in cohomology, Proposition~\ref{P:AJ-Bloch} provides a commutative diagram\,:
\begin{equation*}\label{E:AJ-Bloch-ell}
\begin{tikzcd}
& & J^{2p-1}(X)[\ell^\infty] \\
{\operatorname{CH}^p(X)_{\mathrm{hom}}[\ell^\infty]} && {H^{2p-1}_{\operatorname{\acute{e}t}}(X_{\mathbb C},\integ_\ell(p))\otimes_{\mathbb Z_\ell} \rat_\ell/\integ_\ell.}
\arrow["{\lambda^p}"', from=2-1, to=2-3]
\arrow[from=1-3, to=2-3, equal]
\arrow["{\mathrm{AJ}[\ell^\infty]}", from=2-1, to=1-3] 
\end{tikzcd}
\end{equation*}	
The following lemma will play a crucial role in the proof of Theorem~\ref{T:main}. It shows that, \emph{via} the identification \eqref{E:JWtorsEt}, the restriction of the Walker Abel--Jacobi map to $\ell$-primary torsion coincides with the factorization of the Bloch map given in Lemma~\ref{L:suzuki2}.

\begin{lem}\label{L:BlochWalker}
	Let $X$ be a smooth projective variety over a field $K\subseteq \mathbb C$.  
On algebraically trivial cycles of $\ell$-primary torsion,	the map $\lambda_W^p$ coincides with the Walker Abel--Jacobi map $\psi_W^p$, i.e., the following diagram commutes\,:
	\[\begin{tikzcd}
	&& {J^{2p-1}_{W}(X_\cx)[\ell^\infty]} \\
	{\operatorname{A}^p(X_\cx)[\ell^\infty]} && {\coniveau ^{p-1}H^{2p-1}_{\operatorname{\acute et}}(X_\cx,\mathbb Z_\ell(p)) \otimes_{\mathbb Z_\ell}\mathbb Q_\ell/\mathbb Z_\ell.} 
	\arrow["{\lambda^p_W}"', from=2-1, to=2-3]
	\arrow["{\psi^p_W[\ell^\infty]}", from=2-1, to=1-3]
	\arrow["{\eqref{E:JWtorsEt}}", from=1-3, to=2-3, equal]
	\end{tikzcd}\]
\end{lem}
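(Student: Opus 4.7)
The strategy is to apply the uniqueness clause of Fact~\ref{F:el} to conclude that the two maps coincide. Both $\lambda^p_W$ and the composite of $\psi^p_W[\ell^\infty]$ with the identification~\eqref{E:JWtorsEt} are homomorphisms from the divisible group $\operatorname{A}^p(X_\cx)[\ell^\infty]$ into the common target $\coniveau^{p-1} H^{2p-1}_{\operatorname{\acute{e}t}}(X_\cx,\integ_\ell(p)) \otimes_{\integ_\ell} \rat_\ell/\integ_\ell$; the $\ell$-primary torsion of the divisible group $\operatorname{A}^p(X_\cx)$ is indeed again divisible. I would then introduce the natural homomorphism
$$\beta: \coniveau^{p-1} H^{2p-1}_{\operatorname{\acute{e}t}}(X_\cx,\integ_\ell(p)) \otimes_{\integ_\ell} \rat_\ell/\integ_\ell \longrightarrow H^{2p-1}_{\operatorname{\acute{e}t}}(X_\cx,\integ_\ell(p)) \otimes_{\integ_\ell} \rat_\ell/\integ_\ell$$
induced by the coniveau inclusion, and observe that $\ker \beta$ is the $\ell^\infty$-torsion of the quotient $H^{2p-1}_{\operatorname{\acute{e}t}}/\coniveau^{p-1}$ (via the Tor long exact sequence), hence finite, since $H^{2p-1}_{\operatorname{\acute{e}t}}(X_\cx,\integ_\ell(p))$ is a finitely generated $\integ_\ell$-module. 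So Fact~\ref{F:el} will apply once both candidate maps are shown to be lifts of the same map along $\beta$.

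I would then verify that $\beta$ composed with each candidate equals the restriction of the $\ell$-adic Bloch map $\lambda^p$ to algebraically trivial cycles. For $\lambda^p_W$ this is its defining factorization from Lemma~\ref{L:suzuki2}. For $\psi^p_W[\ell^\infty]$: Theorem~\ref{T:Walker} yields $\alpha \circ \psi^p_W = \psi^p$, so on $\ell^\infty$-torsion, $\alpha[\ell^\infty] \circ \psi^p_W[\ell^\infty] = \psi^p[\ell^\infty]$. By Proposition~\ref{P:AJ-Bloch}, $\psi^p[\ell^\infty]$ coincides with the Betti Bloch map, which matches the $\ell$-adic Bloch map $\lambda^p$ under the Artin comparison. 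Finally, combining~\eqref{E:IJtors},~\eqref{E:JWtorsEt}, and Lemma~\ref{L:ConiGAGAbody}, the composite of $\alpha[\ell^\infty]$ with the inclusion $J^{2p-1}_a(X_\cx)[\ell^\infty] \hookrightarrow J^{2p-1}(X_\cx)[\ell^\infty]$ becomes precisely $\beta$; this is the étale reformulation of the commutative diagram~\eqref{E:BlochWalker}.

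The \emph{main obstacle} is not conceptual but rather bookkeeping: carefully matching the Betti identifications~\eqref{E:IJtors} and~\eqref{E:JWtors} of torsion in the intermediate Jacobians with their étale counterparts via Lemma~\ref{L:ConiGAGAbody}, and verifying that $\alpha[\ell^\infty]$ becomes $\beta$ under these identifications. Since every object involved arises functorially from integral (co)homology tensored with either $\integ_\ell$ or $\rat_\ell/\integ_\ell$, and the coniveau filtration admits parallel descriptions in both settings, these compatibilities follow formally from the Artin comparison theorem. Applying Fact~\ref{F:el} then yields the desired equality.
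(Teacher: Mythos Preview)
Your proposal is correct and follows essentially the same approach as the paper: both argue that $\lambda^p_W$ and $\psi^p_W[\ell^\infty]$ are lifts of the same map $\lambda^p = \psi^p[\ell^\infty]$ along a homomorphism with finite kernel, and then invoke the uniqueness in Fact~\ref{F:el}. The paper's proof is terser, simply pointing to the preceding diagram and the uniqueness of the lifts, whereas you spell out explicitly the finiteness of $\ker\beta$ and the compatibility of the Betti and \'etale identifications---but the underlying argument is identical.
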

\begin{proof}
	This follows directly from restricting the previous diagram 
	to algebraically trivial cycles and from the fact that $\lambda_W^p$ (resp.~${\psi^p_W[\ell^\infty]}$) are the unique lifts of $\lambda^p$ (resp.~${\psi^p[\ell^\infty]}$).
\end{proof}

\subsection{Second proof of Theorem \ref{T:main}} \label{SS:main}
	Let  $X$ be a smooth projective variety over a field $K\subseteq \mathbb C$.
	Recall that we showed in \cite[Thm.~A]{ACMVdmij} (see also~\cite[Thm.~9.1]{ACMVfunctor}) that  $J^{2p-1}_a(X_\cx)$ admits a unique model over $K$ such that the Abel--Jacobi map $\psi^p : \operatorname{A}^p(X_\cx) \to J^{2p-1}_a(X_\cx)$ is $\operatorname{Aut}(\cx/K)$-equivariant. We are going to show that $\alpha$ descends uniquely to $K$ with respect to the above $K$-structure on $J^{2p-1}_a(X_\cx)$. The $\operatorname{Aut}(\cx/K)$-equivariance of  $\psi^p_W : \operatorname{A}^p(X) \to J^{2p-1}_W(X)$ then follows from the unicity of $\psi^p_W$.
	
	To that end, 
let  $C$ be a $K$-pointed, geometrically integral, smooth projective curve over $K$, together with a correspondence $\Gamma \in \operatorname{CH}^p(C\times_K X)$ such that the induced homomorphism $J(C_\cx) \to J^{2p-1}_a(X_{\mathbb C})$ is surjective. The existence of such a $C$ and $\Gamma$ is provided by \cite[Prop.~1.1]{ACMVdmij}. We thus obtain
a commutative diagram
\begin{equation}\label{E:Pfmain}
\begin{tikzcd}
{\operatorname{A}^1(C_\cx)} & {J(C_\cx)}(\mathbb C) \\
{\operatorname{A}^p(X_\cx)} & {J_W^{2p-1}(X_{\mathbb C})}(\mathbb C) & {J^{2p-1}_a(X_{\mathbb C})(\mathbb C),}
\arrow["{\psi^1=\psi^1_W}", from=1-1, to=1-2]
\arrow["{\Gamma_*}"', from=1-1, to=2-1]
\arrow["{\psi^p_W}", from=2-1, to=2-2]
\arrow["{\alpha}", from=2-2, to=2-3, two heads]
\arrow["{\gamma}"', from=1-2, to=2-2, two heads]
\arrow[from=1-2, to=2-3, two heads]
\end{tikzcd}
\end{equation}
where the homomorphism $\gamma$, which is defined by the fact that the Jacobian of a curve together  with the Abel map is  a universal regular homomorphism,  is also induced by  the correspondence $\Gamma_* : H^1(C_\cx^{\mathrm{an}},\integ(1)) \to H^{2p-1}(X_{\mathbb C}^{\mathrm{an}},\integ(p))$ (which factors through $\coniveau^{p-1}H^{2p-1}(X_{\mathbb C}^{\mathrm{an}},\integ(p))$\,; see e.g.,~\cite[Prop.~1.1]{ACMVbloch}).
We then  show that, with respect to the $K$-structure on $J(C_\cx)$ given by the Jacobian $J(C)$ of~$C$, the surjective homomorphism $\gamma$ descends to $K$. (That $\alpha \circ \gamma$ descends to~$K$ was established in \cite[\S2]{ACMVdmij}.) For that purpose, by the elementary \cite[Lem.~2.3]{ACMVdmij}, it suffices to show that, for all primes $\ell$, the $\ell$-primary torsion in 
$$P:= \ker \big(\gamma: J(C_\cx) \to J_W^{2p-1}(X_\cx)\big)$$ 
is stable under the action of $\operatorname{Aut}(\cx/K)$ on $J(C)(\cx)$.

For this we take $\ell$-primary torsion in the commutative diagram \eqref{E:Pfmain}, then use the compatibility of the Bloch map with the Walker Abel--Jacobi map (Lemma~\ref{L:BlochWalker}) to obtain the commutative diagram
$$
\begin{small}
\xymatrix@C=1em@R=1.5em{
&&&0 \ar[d]\\
&&&P[\ell^\infty] \ar[d]\\
\operatorname{A}^1(C_{\mathbb C})[\ell^\infty]\ar[rr]^<>(0.5){\lambda^1[\ell^\infty]}_<>(0.5){\sim} \ar[d]^{\Gamma_*}
\ar@/^3pc/[rrr]^{\psi^1[\ell^\infty]}
	&& H_{\operatorname{\acute{e}t}}^1(C_\cx,\integ_\ell(1))\otimes \rat_\ell/\integ_\ell  \ar[d]^{\Gamma_*}  \ar[d] \ar@{=}[r]&J(C_{\mathbb C})[\ell^\infty] \ar[d]^\gamma\\
\operatorname{A}^p(X_{\mathbb C})[\ell^\infty]\ar[rr]^<>(0.5){\lambda^p_W[\ell^\infty]}
\ar@/_2.5pc/[rrr]^{\psi^p_W[\ell^\infty]} &&\coniveau^{p-1}H_{\operatorname{\acute{e}t}}^{2p-1}(X_{\mathbb C},\integ_\ell(p))\otimes \rat_\ell/\integ_\ell  \ar@{=}[r]& J_W^{2p-1}(X_{\mathbb C})[\ell^\infty]\ar[d]\\
&&&0
}
\end{small}
$$
The only things
 that needs explaining is the middle vertical map\,: here we are using the fact that the Bloch map is compatible with correspondences, and the fact mentioned above  that the correspondence $\Gamma_* : H^1_{\operatorname{\acute{e}t}}(C_\cx,\integ_\ell (1)) \to H_{\operatorname{\acute{e}t}}^{2p-1}(X_{\mathbb C},\integ_\ell(p))$ factors through $\coniveau^{p-1}H_{\operatorname{\acute{e}t}}^{2p-1}(X_{\mathbb C},\integ_\ell(p))$.  (Although we do not strictly  need it for the argument, we note that  the left hand square is $\operatorname{Aut}(\mathbb C/K)$-equivariant due to   Lemma \ref{L:ConiGAGAbody}.)

Therefore $P[\ell^\infty]$ is identified with the kernel of 
$$\Gamma_* : {H_{\operatorname{\acute{e}t}}^1(C_\cx,\integ_\ell(1))\otimes \rat_\ell/\integ_\ell} \to {\coniveau^{p-1}H_{\operatorname{\acute{e}t}}^{2p-1}(X_\cx,\integ_\ell(p))\otimes \rat_\ell/\integ_\ell},
$$ which, since $\Gamma$ is defined over $K$, is stable under the action of $\operatorname{Aut}(\cx/K)$.
We have thus showed that $\gamma$ descends to $K$. 
Combined with the fact \cite[\S 2]{ACMVdmij} that $\alpha \circ \gamma$ also descends to $K$ with respect to the $K$-structure of $J(C_\cx)$ given by $J(C_\cx) = J(C)_\cx$, we readily obtain that $\alpha$ descends to~$K$ (e.g., by the elementary \cite[Lem.~2.4]{ACMVdmij}).
\qed

\subsection{Further remarks}
\begin{rem}[Base change of field]\label{R:L/K}
If $X$ is a smooth projective variety over a field  $K\subseteq L\subseteq \mathbb C$, then there is a canonical identification $J^{2p-1}_{W,X_L/L}=(J^{2p-1}_{W,X/K})_L$. 
\end{rem}

\begin{rem}[Independence of embedding of $K$ in $\mathbb C$] \label{R:EmbKinCC}
	Let $X$ be a smooth \emph{complex} projective  variety.
	  For a smooth projective complex variety $Z$ and an automorphism $\sigma \in \operatorname{Aut}(\cx)$, we denote $Z^\sigma := Z \otimes_\sigma \cx$ the base-change of $Z$ along $\sigma$.
Arguing as in the proof of \cite[Prop.~3.1]{ACMVnormal} shows the following extension of Theorem~\ref{T:main}\,:  for all $\sigma \in \operatorname{Aut}(\cx)$ there is a canonical identification
$$J^{2p-1}_W(X^\sigma) = J^{2p-1}_W(X)^\sigma$$ and a commutative diagram
\[\begin{tikzcd}
{\operatorname{A}^p(X)} & {J^{2p-1}_W(X)} \\
{\operatorname{A}^p(X^\sigma)} & {J^{2p-1}_W(X^\sigma).}
\arrow["{\sigma^*}"', from=1-1, to=2-1]
\arrow["{\sigma^*}", from=1-2, to=2-2]
\arrow["{\psi^p_{W,X}}", from=1-1, to=1-2]
\arrow["{\psi^p_{W,X^\sigma}}", from=2-1, to=2-2]
\end{tikzcd}\]
As a consequence, for a smooth projective variety $X$ over a field $K$ of characteristic $0$, 
the kernel of the Walker Abel--Jacobi map associated to $X$ and an embedding of $K$ into $\mathbb C$ is independent of that embedding.
\end{rem}

\section{Applications regarding the coniveau filtration}

\subsection{Modeling coniveau -- on a question of Mazur} \label{SS:Mazur}
In this paragraph, we show Corollary~\ref{C:model} stating that the model of the Walker intermediate Jacobian over $K$ from Theorem~\ref{T:main} models the torsion-free quotient of 
$\coniveau^{p-1}H_{\operatorname{\acute{e}t}}^{2p-1}(X_{\bar K},\integ_\ell)$.

\begin{proof}[Proof of Corollary \ref{C:model}]
Let $J^{2p-1}_{W,X/K}$ be the model over $K$, provided by Theorem~\ref{T:main}, of the Walker intermediate Jacobian $J^{2p-1}_W(X_{\mathbb C})$ making $\psi^p_W$ an $\operatorname{Aut}(\cx/K)$-equivariant homomorphism.
By the very construction of $J^{2p-1}_{W,X/K}$, and the identification~\eqref{E:JWtorsEt}, we have for all primes $\ell$ an $\operatorname{Aut}(\cx/K)$-equivariant identification 
$T_\ell J^{2p-1}_{W,X/K} = \coniveau^{p-1}H_{\operatorname{\acute{e}t}}^{2p-1}(X_{\mathbb C},\integ_\ell)_\tau$, 
thereby concluding the proof of Corollary~\ref{C:model}.
\end{proof}

\begin{rem}
	Following on Remark~\ref{R:EmbKinCC}, for $X$ a smooth complex projective  variety, the identifications of Corollary~\ref{C:model} more generally fit in the commutative diagram
	\[\begin{tikzcd}
	{T_\ell J^{2p-1}_{W}(X)} & {\coniveau^{p-1}H_{\operatorname{\acute{e}t}}^{2p-1}(X,\integ_\ell)_\tau} \\
	{T_\ell J^{2p-1}_{W}(X^\sigma)} & {\coniveau^{p-1}H_{\operatorname{\acute{e}t}}^{2p-1}(X^\sigma,\integ_\ell)_\tau}
	\arrow["{\sigma^*}"', from=1-1, to=2-1]
	\arrow["{\sigma^*}", from=1-2, to=2-2]
	\arrow["{}", from=1-1, to=1-2, equal]
	\arrow["{}", from=2-1, to=2-2, equal]
	\end{tikzcd}\]
	for every $\sigma \in \operatorname{Aut}(\cx)$.
\end{rem}

\subsection{The image of the $\ell$-adic Bloch map}
Following up on \cite{ACMVbloch}, Corollary~\ref{C:imageBloch} determines exactly the image of the $\ell$-adic Bloch map in case the base field $K$ has zero characteristic. Here is a proof of that corollary\,:

\begin{proof}[Proof of Corollary~\ref{C:imageBloch}]
By the Lefschetz principle we may and do assume $K\subseteq \cx$. By rigidity, it suffices to establish the proposition after base-change to $\cx$.
Taking Tate modules in the commutative diagram of Lemma~\ref{L:BlochWalker}, we obtain a commutative diagram
	\[\begin{tikzcd}
&& {T_\ell J^{2p-1}_{W}(X_\cx)} \\
{T_\ell \operatorname{A}^p(X_\cx)} && {\coniveau ^{p-1}H^{2p-1}_{\operatorname{\acute et}}(X_\cx,\mathbb Z_\ell(p))_\tau .} 
\arrow["{T_\ell\lambda^p_W}"', from=2-1, to=2-3]
\arrow["{T_\ell\psi^p_W}", from=2-1, to=1-3]
\arrow[from=1-3, to=2-3, equal]
\end{tikzcd}\]
The proposition then follows from the fact that the Walker Abel--Jacobi map does not lift along non-trivial isogenies (Theorem~\ref{T:Walker-initial}) and from the equivalence of (3) and (4) in Corollary~\ref{C:factorization}.
\end{proof}

\bibliographystyle{amsalpha}
 \bibliography{DCG}

\end{document}